\date{}
 \newtheorem{theorem}{Theorem}[section]
 \newtheorem{lemma}[theorem]{Lemma}
 \theoremstyle{definition}
 \newtheorem{definition}[theorem]{Definition}
 \theoremstyle{remark}
 \theoremstyle{eg}
 \newtheorem{example}[theorem]{Example}
 \theoremstyle{fact}
\numberwithin{equation}{section}
\title{\Large\bf  Approximation analysis  for the minimization problem of difference-of-convex functions with Moreau envelopes
\vskip2mm
\small {Dedicated to Professor Rockafellar R.T. for his 90th birthday}}
\author{Yan Tang$^{1,2}$, Shiqing Zhang$^{1}$\\
$^{1}$ College of Mathematics, Sichuan University, Chengdu 610065, China\\(zhangshiqing@scu.edu.cn)\\
 $^{2}$ School of Mathematics and Statistics,
Chongqing Technology \\
and Business University,Chongqing 400067, China. (ttyy7999@163.com)\\
}
\begin{document}
\maketitle

\begin{abstract}
In this work  the minimization problem  for the difference of convex (DC) functions is studied by using  Moreau envelopes and  the  descent method with Moreau gradient is employed  to approximate the numerical  solution.  The main regularization idea in this work is inspired by   Hiriart-Urruty \cite{HU1991}, Moudafi\cite{M2023}, regularize the components of the DC problem by adapting  the  different parameters and strategic matrices flexibly to evaluate    the  whole DC problem.  It is shown that the   inertial gradient  method  as well as the classic gradient descent scheme tend  towards  an  approximation stationary point of the original problem. 

\vskip.2in \noindent
{\em 2000 Mathematics Subject Classification:} 65K05; 65K10; 47H10; 47L25.\\

\noindent {\em Keywords:}  Difference-of-convex optimization; Moreau envelope; Inertial method; Gradient method.

\end{abstract}

%%%%%%%%%%%%%%%%%%%%%%%%%%%%%%%%%%%%%%%%%%%%%%%%%%%%%%%%%%%%%%%%%%%%%%%%%%%%%%%%%%%%

\section{Introduction}
In this paper, we are concerned with the Difference-of-convex {\it DC}  optimization problem, which 
 reads as  
\begin{eqnarray}\label{Eq:inclusion}
\inf_{x\in \mathbb{R}^n}\Phi(x)={g(x)-f(x)}.
\end{eqnarray}
where $f, g$ are  two convex  functions
\vskip1mm
Up to now, the difference-of-convex (DC) optimization problem has received widespread attention due to its various applications, such as digital communication systems (Alvarado et al. \cite{ASP2014}), allocation and power allocation \cite{PE2010}, and compressed sensing (Yin et al.\cite{YLHX2015}, Beck and Teboulle\cite{BT2009},  Bertsekas\cite{B1999}), multi-channel networks, image restoration processing, discrete tomography, and clustering, and it seems particularly suitable for modeling some nonconvex industrial problems.
\vskip1mm
  Solving the DC (difference of convex functions) program in the past decades mainly relies on the combination method for solving global continuous optimization ( involving finding global solutions for nonconvex models), and   the convex analysis method for solving noconvex programming which mainly originated from Pham Dinh Tao's work in 1974 on the calculation of the bounded norm of matrices (i.e. maximizing the semi norm on the unit sphere of the norm) and after that some mathematicians  extensively studied and introduced subgradient algorithms for solving convex maximization  problems in nonsmooth and nonconvex optimization(\cite{T1981, T1984, T1985, T1986} and their references). These works are extended to DC (difference of convex functions) programs in a natural and reasonable way. 
  \vskip1mm
Compared with the combination methods that have studied many global algorithms, there are few algorithms in convex analysis methods for solving DC programs.  Several recent works seem to have proposed novel inexact approaches suited for convex minimization problems. In An and Tao \cite{AT2005}, they considered the  primal and dual problem and constructed the  primal and dual solutions  sequences $\{x_n\}$ and $\{y_n\}$
\begin{eqnarray*}\label{Eq:inclusion2}
y_n\in argmin_{y}{f^*(y)-g^*(y_{n-1})-\langle x_n,y-y_{n-1}\rangle},\\
x_{n+1}\in argmin_{x}{g(x)-f(x_n)-\langle x-x_n,y_n\rangle},
\end{eqnarray*}
where  $g^*, h^*$ denote
the conjugate functions of $g$ and $h$, respectively. The relevant achievements of the dual method can also be found in Rockafellar\cite{R1966}.
\vskip1mm

Regularization techniques in DC programming which have been first studied by  Tao in 1986\cite{T1988}   while studying numerical algorithms and they were used to improve the DC algorithm in solutions of many real world nonconvex programs,  see Dinh and An\cite{DA1997},  Hiriart-Urruty\cite{HU1985}, Tao \cite{T1984,T1985} and references therein.
\vskip1mm
%In  past decades, the  convex analysis approach to nonconvex programming has been much celebrated results, especially,  the convexity of the two DC components $g $ and $f $of the objective function has been used to develop appropriate tools from both theoretical and algorithmic perspectives,the readers can refer Toland \cite{T1978,T1979}. These powerful tools allow the extension of  convex programming to methods are useful to nonconvex optimization problems which is not too large but sufficiently wide to cover most real world problems. The other  support of the  convex analysis  is DC duality. Toland \cite{T1978} first studied DC duality in 1978  and extended Pham Dinh Tao's work on convex maximization programming in a very elegant and natural way ($g$ is an indicator function of non empty closed convex sets in X):
%\begin{eqnarray}\label{Eq:inclusion}
%\inf_{y\in Y}{f^*(y)-g^*(y)}=\inf_{x\in\mathbb{R}^n}g(x)-f(x).
%\end{eqnarray}
%(where $Y$ is the dual space of $X$, which can be identified with $X$ itself, and $g^*, h^*$ denote the conjugate functions of $g$ and $h$,respectively). 
\vskip1mm

In these celebrated results, Moreau regularization has played a major role. In recent years, the development of statistical machine learning algorithms and other applications has also shown that Moreau regularization have been
prevalent  due to their nice properties, better generalization ability and concise calculation of gradient.
  The Moreau regularization  for a convex function $g$ is 
\begin{eqnarray*}
g^\lambda(x)=\inf_{z\in \mathbb{R}^n} g(z)+\frac{1}{2\lambda}\|z-x\|^2.
\end{eqnarray*}
\vskip1mm
Although nonsmooth weakly lower semicontinuous convex functions can be smoothed through their Moreau envelope, applying that directly to the DC problem $\Phi=g-f$ as a whole may be unreliable. On the one hand, the proximal mapping of $\Phi$ may be difficult to calculate and even undefined; on the other hand, due to the concave component $-f $, the Moreau envelope of $\Phi$ may not be smooth. 
\vskip1mm
 To conqure this drawback, under the motivation that smoothing each component of $\Phi$ separately will surely give a smooth DC function, Sun and Sun\cite{SS2021} studied a smoothing approximation of a general DC function called Moreau envelope difference (DME) smoothing,  where both components $g$ and $f$ of the DC function are replaced by their respective Moreau envelopes:
\begin{eqnarray}\label{Eq:inclusion3}
\inf_{x\in \mathbb{R}^n}\Phi_{\lambda}(x)=g^{\lambda}(x)-f^{\lambda}(x),
\end{eqnarray}
where $g=g_1+g_2$ with $g_1$ continuous, Lipshictz differential and $g_2$ proper closed convex,and
\begin{eqnarray*}
&&g^{\lambda}(x)=\min_{z\in \mathbb{R}^n}\{\langle\nabla g_1(x),z\rangle+g_2(z)+\frac{1}{2\lambda}\|z-x\|^2\},\\
&&f^{\lambda}(x)=\min_{z\in\mathbb{R}^n}\{f(z)+\frac{1}{2\lambda}\|z-x\|^2\}. 
\end{eqnarray*} 
Subsequently, Sun and Sun's method was extended by Moudafi\cite{M2023} and a beautiful achievement is presented, specifically, the selection of the different  parameters,  maximizing the flexibility of regularization to approximate the solution of  problem (\ref{Eq:inclusion}). For the nonconvex optimization, the readers can refer the excellent works of Bonettini et al.\cite{BPR2020}, Chen \cite{C2013}, Toland \cite{T1978,T1979}.
\vskip1mm

In view of the success of Moreau regularization techniques in approximating the nonconvex optimization problems, and  the moderate smoothness of the penalty term, in this paper, we incorporate regular terms with optional strategy and different parameters  to generalize the Moreau envelop, that is,
\begin{eqnarray}\label{Eq:inclusion4}
\inf_{x\in \mathbb{R}^n}\Phi_{\lambda,\mu}(x)=g_{\lambda,D_1}(x)-f_{\mu,D_2}(x),
\end{eqnarray}
where $g_{\lambda,D_1}(x)$ and $f_{\mu,D_2}(x)$ standing for the Moreau envelopes of $g$, $f$  induced by $\lambda,\mu$ and strategic matrices $D_i(i=1,2)$, respectively.  And, inspired by recently works on the  so-called heavy ball method  which  has been introduced  and been translated,  modified and generalized successively by Polyak \cite{P1964},  Alvarez\cite{A2004}, Alvarez and Attouch \cite{AA2001},  Nesterov\cite{N1983},  Moudafi and Oliny \cite{MO2003}, Guler\cite{G1991},  Beck and Teboulle \cite{BT2009},  two  parallel proximal algorithms are proposed and the approximation analysis are obtained.
\vskip 1mm
The outline of the paper is as follows. In Section \ref{Sec:Pre}, we collect some definitions and results needed for our analysis. In Section \ref{Sec:Alg}, the properties of  Moreau envelopes  induced by $\lambda,\mu$ and $D_i(i=1,2)$ are studied,  two parallel algorithms based on the classical gradient descent method are also proposed and the approximation analysis are obtained. Finally, in Section 4 numerical example illustrates the performances of our scheme.
\vskip 1mm
\section{Some definitions and lemmas}\label{Sec:Pre}
Let $X$ be  the n-dimensional Euclidean space $\mathbb{R}^n$ with inner product $\langle \cdot,\cdot\rangle$ and
Euclidean norm $\|\cdot\|$.  Let $\mathcal{S_{++}}$ be the set of symmetric positive definite matrices. For $m\geq 1$, let $M_m\subset \mathcal{S_{++}}$ be the set of all symmetric positive definite matrices with  eigenvalues contained in  $[\frac{1}{m},m]$. Agree that the norm of $D$ is the largest eigenvalue $m$.  For any $D\in M_m$, we have $D^{-1}\in M_m $, and 
\begin{eqnarray}\label{Eq2.1}
\frac{1}{m}\|x\|^2\leq\|x\|^2_{D}\leq m\|x\|^2,
\end{eqnarray}
 where the norm in the metric induced by $D$ is $\|x\|_{D}=\sqrt{x^TDx}$.

\begin{definition}\label{Def:2.1}
(Clarke\cite{C2013}) A function $g:X\rightarrow \mathbb{R}$ is said  to be lower semi-continuous if
\begin{eqnarray*}
g(u)\leq\lim_{n\rightarrow \infty}\inf g(u_n),
\end{eqnarray*}
for each $u\in X$.
\end{definition}
%In fact, in Euclidean space $\mathbb{R}^n$,   the weak lower semi-continuous is equivalent to   lower semi-continuous.
%\begin{definition}
%Let $g:X\rightarrow R\cup\{+\infty\}$ be a convex function and its domain is defned by $dom g :=\{x\in X:g(x)<+\infty\}$. The Fenchel conjugate of $g$ denoted by $g^*:X^*\rightarrow R\cup\{+\infty\}$:
%\begin{eqnarray*}
%g^*(w)=\sup_{x\in X}\{\langle x,w\rangle-g(x)\},\forall w\in X^*
%\end{eqnarray*}
%\end{definition}

\begin{definition}\label{Def:2.2}
 (Rockfellar\cite{R1976}) Let $g:X\rightarrow \mathbb{R}$ be a lower semicontinuous convex function, the subdifferential $\partial g$ of $g$ is defined to be the following set-valued operator: if $u \in dom(g)$,
 \begin{eqnarray*}
\partial g(u)=\{u^*:\langle u^*,v-u\rangle+g(u)\leq g(v),\forall v\in X,
\end{eqnarray*}
and if $u \notin dom(g)$, set $\partial g(u)=\emptyset$.
\vskip1mm
If $g$ is G$\hat{a}$teaux differentiable at $u$, denote by $\nabla g(u)$ the derivative of $g$ at $u$. In this case $\partial g(u)=\nabla g(u).$
\end{definition}
\begin{definition}\label{Def:2.3}
 (Clarke\cite{C2013})  Let $g:X\rightarrow \mathbb{R}$ be locally Lipschitz, the Clarke's generalized directional derivative of $g$ at $x$ in the direction $v$, denoted by $g^\circ(x;v)$, is defined as follows:
$$g^\circ(x;v)=\limsup_{y\rightarrow x,t\downarrow 0}\frac{g(y+tv)-g(y)}{t},$$
where $y$ lives in $E$ and $t$ is a positive scalar.
\end{definition}

\begin{definition}\label{Def:2.4} 
 (Clarke\cite{C2013}) The generalized gradient of the function $g$ at $x$, denoted by $\partial_C g(x)$, is the unique nonempty weak * compact convex subset of $E^*$ whose support function is $g^\circ(x;v)$, that is,
$$\partial_C g(x)=\{\xi:g^\circ(x;v)\geq\langle \xi,v\rangle,\forall v\in E\}.$$
\end{definition}
If $g$ is a convex function, then $\partial_C g(u)=\partial g(u).$
\begin{lemma}
Let $g, f$ be proper convex functions on $X$, and $\Phi=g-f$  attains its  minimum at $\tilde{x}$,  then   $\partial g(\tilde{x})\cap \partial f(\tilde{x})\neq\emptyset$.
\end{lemma}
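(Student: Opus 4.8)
The plan is to exploit the global optimality of $\tilde{x}$ to convert the minimization inequality into a pointwise functional inequality, and then use that inequality to carry any subgradient of $f$ at $\tilde{x}$ over to a subgradient of $g$ at $\tilde{x}$. The guiding observation is that minimality of $\Phi = g - f$ at $\tilde{x}$ means $g(\tilde{x}) - f(\tilde{x}) \leq g(x) - f(x)$ for every $x \in X$, which rearranges to $g(x) - g(\tilde{x}) \geq f(x) - f(\tilde{x})$ for all $x$. This single inequality already encodes everything the lemma needs.

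First I would fix an arbitrary $\xi \in \partial f(\tilde{x})$. By the definition of the subdifferential of the convex function $f$ (Definition \ref{Def:2.2}), this gives $f(x) - f(\tilde{x}) \geq \langle \xi, x - \tilde{x}\rangle$ for all $x \in X$. Chaining this with the rearranged optimality inequality from the previous paragraph yields $g(x) - g(\tilde{x}) \geq f(x) - f(\tilde{x}) \geq \langle \xi, x - \tilde{x}\rangle$ for every $x$. Reading the outer inequality $g(x) - g(\tilde{x}) \geq \langle \xi, x - \tilde{x}\rangle$ through the same Definition \ref{Def:2.2} shows precisely that $\xi \in \partial g(\tilde{x})$. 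Hence every subgradient of $f$ at $\tilde{x}$ is also a subgradient of $g$ at $\tilde{x}$, i.e.\ $\partial f(\tilde{x}) \subseteq \partial g(\tilde{x})$, which is in fact stronger than the claimed nonempty intersection.

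The one genuine gap to close is that the argument produces an element of the intersection only if $\partial f(\tilde{x})$ is nonempty to begin with. This is where I would invoke the standing assumption that $f, g : X \to \mathbb{R}$ are finite-valued convex functions on the finite-dimensional space $X = \mathbb{R}^n$: such functions are automatically continuous and locally Lipschitz, hence subdifferentiable at every point, so $\partial f(\tilde{x}) \neq \emptyset$. Combining this with the inclusion $\partial f(\tilde{x}) \subseteq \partial g(\tilde{x})$ gives $\partial g(\tilde{x}) \cap \partial f(\tilde{x}) = \partial f(\tilde{x}) \neq \emptyset$, which completes the proof. I expect the nonemptiness of $\partial f(\tilde{x})$ to be the only subtle point; the remainder is a two-line manipulation of the subgradient inequalities. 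If one worked instead with extended-real-valued proper convex functions, this step would instead require $\tilde{x}$ to lie in the relative interior of $\mathrm{dom}\, f$, or some additional qualification, to guarantee that the subdifferential is nonempty.
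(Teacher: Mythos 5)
Your proof is correct, but it takes a genuinely different route from the paper's. The paper argues through Clarke's nonsmooth calculus: it applies the sum rule $\partial_C(g-f)(\tilde{x})\subset\partial_C g(\tilde{x})+\partial_C(-f)(\tilde{x})=\partial_C g(\tilde{x})-\partial_C f(\tilde{x})$ (valid because finite convex functions on $\mathbb{R}^n$ are locally Lipschitz) together with the Fermat-type stationarity condition $0\in\partial_C(g-f)(\tilde{x})$, and concludes that the two convex subdifferentials must share a point. You instead work directly from global optimality: rearranging $g(\tilde{x})-f(\tilde{x})\leq g(x)-f(x)$ into $g(x)-g(\tilde{x})\geq f(x)-f(\tilde{x})$ and chaining this with the subgradient inequality for $f$ carries every $\xi\in\partial f(\tilde{x})$ into $\partial g(\tilde{x})$. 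Your route is more elementary (no generalized-gradient calculus at all) and yields the strictly stronger conclusion $\partial f(\tilde{x})\subseteq\partial g(\tilde{x})$, which is the classical Toland/Hiriart-Urruty condition for global minimizers; the trade-off is that your argument genuinely requires $\tilde{x}$ to be a global minimizer, whereas the paper's stationarity argument would still give the (weaker) nonempty-intersection conclusion at a mere local minimizer. Both proofs rest on the same residual delicacy, which you flag explicitly and the paper passes over quickly: one must know $\partial f(\tilde{x})\neq\emptyset$. This holds here because a finite-valued convex function on $\mathbb{R}^n$ is locally Lipschitz and everywhere subdifferentiable, but, as you note, for genuinely extended-real-valued proper convex functions one would need an additional qualification such as $\tilde{x}\in\mathrm{ri}(\mathrm{dom}\,f)$.
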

\begin{proof} 
From Clarke\cite{C2013}, $\partial_C(g-f)(x)\subset \partial_C g(x)+ \partial_C (-f(x))=  \partial_C g(x)- \partial_C f(x)$. In addition, since $ f$ is proper convex function, it is locally Lipschitzian, and then $-f$ is also locally Lipschitzian, by the optimal condition, we have $0\in \partial_C(g-f)(\tilde{x})\subset  \partial_C g(\tilde{x})- \partial_C f(\tilde{x})$, which means that $\partial g(\tilde{x})\cap \partial f(\tilde{x})\neq\emptyset$.
\end{proof}

In the subsequent work, the components in (\ref{Eq:inclusion}) will be regularized respectively, to characterize the approximation solution of the whole problem, two metric functions $g_{\lambda, D}(z, x)$ and $f_{\mu,D}(z,x)$ for convex functions $g$ and $f$ are introduced as follows.
  
\begin{definition}\label{Def:2.3}
Let $\lambda>0,\mu>0$, $D\in S_{++}(\mathbb{R}^n)$ and $x\in \mathbb{R}^n$, the metric functions associated to $f$ and $g$ with parameters $\lambda$, $\mu$ and $D_i(i=1,2)$ are given by 
$$g_{\lambda,D_1}(z,x)=g(z)-g(x)+\frac{\|z-x\|^2_{D_1}}{2\lambda}, f_{\mu,D_2}(z,x)=f(z)-f(x)+\frac{\|z-x\|^2_{D_2}}{2\mu}.$$

For some $\epsilon>0$, if  we have 
\begin{eqnarray*}
|g_{\lambda,D_1}(\bar{z},x)-g_{\lambda,D_1}(z,x)|\leq \epsilon,\hspace{0.2cm}\text{and}\hspace{0.2cm}|f_{\mu,D_2}(\bar{z},x)-f_{\mu,D_2}(z,x)|\leq \epsilon,
\end{eqnarray*}
then the  point $\bar{z}$  is called an $\epsilon-$approximation of $z$.
\end{definition}
 The given constant $\epsilon$  controls the distance of the approximation from $z$ to $\bar{z}$  to $\hat{z}$. The metric function associated to $g$ with parameter $\lambda$ and $D_1)$ has the following property.

\begin{lemma}\label{lem:2.4}
For any $z_1,z_2\in \mathbb{R}^n$, we have
\begin{eqnarray*}
g_{\lambda,D_1}(z_2,x)-g_{\lambda,D_1}(z_1,x)\geq \nabla_{z_1} g_{\lambda,D_1}(z_1,x)^{T}(z_2-z_1)+\frac{1}{2\lambda m}\|z_2-z_1\|^2.
\end{eqnarray*}
\end{lemma}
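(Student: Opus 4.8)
The plan is to observe that the map $z \mapsto g_{\lambda,D_1}(z,x)$ is simply a convex function plus a strongly convex quadratic, so the asserted inequality is just the standard strong-convexity estimate with the modulus read off from the spectral bound \eqref{Eq2.1}. Fixing $x$ and writing $q(z) := \frac{1}{2\lambda}\|z-x\|_{D_1}^2 = \frac{1}{2\lambda}(z-x)^T D_1(z-x)$, I would first split
\[
g_{\lambda,D_1}(z,x) = g(z) - g(x) + q(z),
\]
and treat the two genuinely $z$-dependent pieces, the convex part $g$ and the smooth quadratic $q$, separately, since the constant $-g(x)$ cancels in the difference $g_{\lambda,D_1}(z_2,x)-g_{\lambda,D_1}(z_1,x)$.

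For the convex piece I would invoke the subgradient inequality of Definition \ref{Def:2.2}: for any $\xi \in \partial g(z_1)$ one has $g(z_2)-g(z_1) \geq \langle \xi, z_2-z_1\rangle$. For the quadratic piece, being exactly quadratic with Hessian $\frac{1}{\lambda}D_1$, it satisfies the \emph{exact} second-order identity (the Taylor remainder is the quadratic term itself, with no error)
\[
q(z_2)-q(z_1) = \langle \nabla q(z_1), z_2-z_1\rangle + \tfrac{1}{2\lambda}\|z_2-z_1\|_{D_1}^2,
\qquad \nabla q(z_1)=\tfrac{1}{\lambda}D_1(z_1-x).
\]
The only remaining estimate is to bound the last term from below by the left inequality in \eqref{Eq2.1}, namely $\|z_2-z_1\|_{D_1}^2 \geq \frac{1}{m}\|z_2-z_1\|^2$. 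Adding the two displays and collecting the first-order terms into $\nabla_{z_1} g_{\lambda,D_1}(z_1,x) = \xi + \frac{1}{\lambda}D_1(z_1-x)$ then yields
\[
g_{\lambda,D_1}(z_2,x)-g_{\lambda,D_1}(z_1,x) \geq \nabla_{z_1} g_{\lambda,D_1}(z_1,x)^{T}(z_2-z_1) + \tfrac{1}{2\lambda m}\|z_2-z_1\|^2,
\]
which is exactly the claim.

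I expect no genuine obstacle beyond one point of bookkeeping: the interpretation of the symbol $\nabla_{z_1} g_{\lambda,D_1}$ when $g$ is merely convex and possibly nonsmooth. There $\nabla_{z_1} g_{\lambda,D_1}(z_1,x)$ must be read as a subgradient selection $\xi + \frac{1}{\lambda}D_1(z_1-x) \in \partial_{z_1} g_{\lambda,D_1}(z_1,x)$, and the inequality then holds for every such selection (in particular for the true gradient when $g$ is G\^ateaux differentiable, as in Definition \ref{Def:2.2}). Equivalently, the entire argument can be summarized by the single observation that $g_{\lambda,D_1}(\cdot,x)$ is strongly convex with modulus $\frac{1}{\lambda m}$, because adding $q$ raises the Hessian lower bound of the convex part by $\frac{1}{\lambda}D_1 \succeq \frac{1}{\lambda m}I$; the spectral inequality \eqref{Eq2.1} supplies this modulus while the convexity of $g$ supplies the first-order term.
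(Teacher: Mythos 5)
Your proposal is correct and takes essentially the same route as the paper's proof: the paper likewise expands the $D_1$-weighted quadratic exactly, applies the subgradient inequality to the convex part $g$, collects the first-order terms into $\nabla_{z_1} g_{\lambda,D_1}(z_1,x)$, and finishes with the spectral bound $\|z_2-z_1\|_{D_1}^2 \geq \frac{1}{m}\|z_2-z_1\|^2$ from (\ref{Eq2.1}). Your remark that $\nabla_{z_1} g_{\lambda,D_1}(z_1,x)$ must be read as a subgradient selection when $g$ is nonsmooth is a point the paper glosses over, but it does not change the argument.
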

\begin{proof}
\begin{eqnarray*}
&&g_{\lambda,D_1}(z_2,x)-g_{\lambda,D_1}(z_1,x)\\
&= &g(z_2)-g(x)+\frac{\|z_2-x\|^2_{D_1}}{2\lambda}-g(z_1)+g(x)-\frac{\|z_1-x\|^2_{D_1}}{2\lambda}\\
&=& g(z_2)-g(z_1)+\frac{1}{2\lambda}\|z_2-z_1\|_{D_1}^2+\frac{1}{\lambda}(z_1-x)^{T}D_1(z_2-z_1)\\
&\geq& \langle \partial g(z_1),z_2-z_1 \rangle+ \frac{1}{2\lambda}\|z_2-z_1\|_{D_1}^2+\frac{1}{\lambda}(z_1-x)^{T}D_1(z_2-z_1)\\
&=&\nabla_{z_1} g_{\lambda,D_1}(z_1,x)^{T}(z_2-z_1)+ \frac{1}{2\lambda}\|z_2-z_1\|_{D_1}^2\\
&\geq& \nabla_{z_1} g_{\lambda,D_1}(z_1,x)^{T}(z_2-z_1)+ \frac{1}{2m\lambda}\|z_2-z_1\|^2.
\end{eqnarray*}
\end{proof}
%\begin{lemma}\label{Lem:2.4}
 %(see e.g. Opial \cite{O1967}) Let $H$ be a real Hilbert space and $\{x_n\}_{n=0}^\infty \subset H$ such that there exists a nonempty, closed and convex set $S \subset H$ satisfying \\
% (1) For every $z\in S$, $\lim_{n\rightarrow\infty}\|x_n-z\|$ exists, \\
%(2) Any weak cluster point of $\{x_n\}_{n=0}^\infty$ belongs to $S$.\\
 %Then, there exists $\bar{x}\in S$ such that $\{x_n\}_{n=0}^\infty$ converges weakly to $\bar{x}$.
%\end{lemma}

\begin{lemma}\label{Lem:2.5}(Descent Lemma, See Bertsekas\cite{B1999}) If $f$ is differential and $\|\nabla f(x)-\nabla f(y)\|\leq \eta\|x-y\|$, then the following holds:
 \begin{eqnarray*}
f(y)\leq f(x)+\langle\nabla f(x),y-x\rangle+\frac{\eta}{2}\|y-x\|^2.
\end{eqnarray*}
\end{lemma}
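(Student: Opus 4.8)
The plan is to reduce this multivariate inequality to a one-dimensional integral along the segment joining $x$ and $y$, and then to control the deviation of the gradient by the Lipschitz hypothesis. First I would introduce the auxiliary function $\phi:[0,1]\to\mathbb{R}$ defined by $\phi(t)=f(x+t(y-x))$. Since $f$ is differentiable, $\phi$ is differentiable with $\phi'(t)=\langle\nabla f(x+t(y-x)),y-x\rangle$, and the fundamental theorem of calculus gives
$$f(y)-f(x)=\phi(1)-\phi(0)=\int_0^1\langle\nabla f(x+t(y-x)),y-x\rangle\,dt.$$

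Next I would subtract the linear term $\langle\nabla f(x),y-x\rangle$ from both sides, rewriting the constant as $\int_0^1\langle\nabla f(x),y-x\rangle\,dt$ so that the two integrals combine into a single one:
$$f(y)-f(x)-\langle\nabla f(x),y-x\rangle=\int_0^1\langle\nabla f(x+t(y-x))-\nabla f(x),y-x\rangle\,dt.$$

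The remaining step is purely estimative. Applying the Cauchy--Schwarz inequality inside the integral and then the Lipschitz bound $\|\nabla f(x+t(y-x))-\nabla f(x)\|\leq\eta\|t(y-x)\|=\eta t\|y-x\|$ yields
$$f(y)-f(x)-\langle\nabla f(x),y-x\rangle\leq\eta\|y-x\|^2\int_0^1 t\,dt=\frac{\eta}{2}\|y-x\|^2,$$
which is exactly the claimed inequality after rearranging. I do not expect a genuine obstacle here: the only point requiring any care is the justification that $\phi'$ is integrable so that the fundamental theorem applies, but this is immediate, since the Lipschitz condition forces $\nabla f$ to be continuous, whence $t\mapsto\langle\nabla f(x+t(y-x)),y-x\rangle$ is continuous on the compact interval $[0,1]$ and the integral is well defined.
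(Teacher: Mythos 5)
Your argument is correct and is precisely the standard proof of the Descent Lemma: the paper itself offers no proof, simply citing Bertsekas, and the cited source proceeds exactly as you do, by integrating $t\mapsto\langle\nabla f(x+t(y-x)),y-x\rangle$ over $[0,1]$, subtracting the linear term, and bounding the resulting integrand via Cauchy--Schwarz and the Lipschitz estimate $\eta t\|y-x\|$. Your closing remark on integrability is also sound, since Lipschitz continuity of $\nabla f$ makes the integrand continuous on $[0,1]$, so there is no gap.
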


\section{Main result}\label{Sec:Alg}
\subsection{Some properties on the difference of Moreua Envelopes with different parameters}
In this section,we are concerned with the following  difference $\Phi(x)=g(x)-f(x)$ of convex problems, that is, finding $x\in \mathbb{R}^n$ such that
\begin{eqnarray}\label{Eq:inclusion1}
\inf_{x\in \mathbb{R}^n}\Phi(x).
\end{eqnarray} 
{\bf ASSUMPTION }
\vskip1mm
(A1) The functions  $g$  and $f$ are proper convex lower semicontinuous  on $\mathbb{R}^n$, and $dom g \subset dom f$.
\vskip1mm
(A2) The original function $\Phi$ is bounded below and  satisfies 
$\Phi(x)\geq \phi(\|x\|)+\beta$, where $\phi:[0,+\infty)\rightarrow[0,+\infty)$ is a nondecreasing continuous function with $\phi(0)=0$, $\lim_{t\rightarrow\infty}\phi(t)=+\infty$, $\beta$  is a real number.
\vskip1mm
(A3)  $D_i(i=1,2)\in M_m\subset \mathcal{S_{++}}, m\geq 1$.
\vskip1mm
To approximate the solution of (\ref{Eq:inclusion1}), we consider the following  difference of  Moreau envelopes of $f$ and $g$  induced by $\lambda,\mu$ and $D_i(i=1,2)$
as 
\begin{eqnarray}\label{3.2}
\inf_{x\in \mathbb{R}^n}\{\Phi_{\lambda,\mu}(x)=g_{\lambda,D_1}(x)-f_{\mu,D_2}(x)\},
\end{eqnarray}
where  $$g_{\lambda,D_1}(x)=\inf_{w\in \mathbb{R}^n}\{g(w)+\frac{1}{2\lambda}\|w-x\|^2_{D_1}\},\hspace{0.2cm}f_{\mu,D_2}(x)=\inf_{w\in \mathbb{R}^n}\{f(w)+\frac{1}{2\mu}\|w-x\|^2_{D_2}\}.$$
It follows from Glowinski et. al \cite{GOY2016}, $g_{\lambda,D_1}(x)\leq g(x)$ for all $x\in\mathbb{R}^n$, and $argmin g_{\lambda,D_1}(x)=argmin g(x)$, which is in general called the Moreau proximal operator  $prox_{\lambda g}^{D_1}(x)$.In fact, from the definition of $g_{\lambda, D_1}(z,x)$, we have 
$$argmin_{z\in \mathbb{R}^n} g_{\lambda, D_1}(z,x)=argmin g_{\lambda,D_1}(x)=argmin g(x).$$ Likely, 
$$argmin_{z\in \mathbb{R}^n} f_{\mu, D_2}(z,x)=prox_{\mu f}^{D_2}(x)= argmin_{x\in \mathbb{R}^n}f_{\mu,D_2}(x).$$

 {\bf Remark 1} For the Moreau proximal operators of $f$ and $g$, we have  $$g_{\lambda,D_1}(prox_{\lambda g}^D(x))=\inf_x g_{\lambda,D_1}(x)=g(prox_{\lambda g}^{D_1}(x))=\inf_x g(x),$$
and
 $$f_{\mu,D_2}(prox_{\mu f}^D(x))=\inf_x f_{\mu,D_2}(x)=f(prox_{\mu f}^{D_2}(x))=\inf_x f(x).$$

\begin{lemma}\label{lem:3.1}
 The Moreau proximal operators $prox_{\lambda g}^{D_1}(\cdot)$ , $prox_{\mu f}^{D_2}(\cdot)$ are Lipschitzian and single-valued. 
\end{lemma}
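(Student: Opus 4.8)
The plan is to prove both assertions for $prox_{\lambda g}^{D_1}$ and to note that the argument for $prox_{\mu f}^{D_2}$ is verbatim the same after substituting $(g,\lambda,D_1)$ by $(f,\mu,D_2)$. The two claims split naturally: single-valuedness follows from a strong-convexity argument, while the Lipschitz property follows from monotonicity of the subdifferential combined with the eigenvalue bounds in (\ref{Eq2.1}).

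\textbf{Single-valuedness.} For a fixed $x$, I would observe that the objective
$$w\mapsto g(w)+\frac{1}{2\lambda}\|w-x\|_{D_1}^2$$
is proper and lower semicontinuous by (A1), and strongly convex: the term $g$ is convex, while the quadratic penalty is strongly convex because the smallest eigenvalue of $D_1$ is at least $\frac{1}{m}$, so by (\ref{Eq2.1}) its modulus of strong convexity is at least $\frac{1}{\lambda m}$. A proper lower semicontinuous strongly convex function is coercive and attains its infimum at a unique point, so $prox_{\lambda g}^{D_1}(x)$ is well defined and single-valued.

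\textbf{Lipschitz continuity.} Fix $x_1,x_2$ and set $p_i=prox_{\lambda g}^{D_1}(x_i)$. Since the quadratic term is everywhere differentiable with gradient $\frac{1}{\lambda}D_1(\cdot-x_i)$, the first-order optimality condition reads
$$0\in\partial g(p_i)+\frac{1}{\lambda}D_1(p_i-x_i),\qquad\text{that is}\qquad \frac{1}{\lambda}D_1(x_i-p_i)\in\partial g(p_i).$$
I would then invoke monotonicity of $\partial g$: subtracting the two inclusions and pairing with $p_1-p_2$ gives
$$\Big\langle \frac{1}{\lambda}D_1\big[(x_1-x_2)-(p_1-p_2)\big],\,p_1-p_2\Big\rangle\geq 0,$$
which, after multiplying by $\lambda$ and using the symmetry of $D_1$, rearranges to $\|p_1-p_2\|_{D_1}^2\leq\langle x_1-x_2,\,p_1-p_2\rangle_{D_1}$, where $\langle u,v\rangle_{D_1}=u^{T}D_1 v$. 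Applying the Cauchy--Schwarz inequality in the $D_1$ inner product yields $\|p_1-p_2\|_{D_1}\leq\|x_1-x_2\|_{D_1}$, i.e.\ the operator is nonexpansive in the $D_1$-metric.

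Finally I would pass to the Euclidean norm via (\ref{Eq2.1}): from
$$\frac{1}{m}\|p_1-p_2\|^2\leq\|p_1-p_2\|_{D_1}^2\leq\|x_1-x_2\|_{D_1}^2\leq m\|x_1-x_2\|^2$$
one obtains $\|p_1-p_2\|\leq m\|x_1-x_2\|$, so $prox_{\lambda g}^{D_1}$ is Lipschitz with constant $m$. I expect the only delicate point to be the bookkeeping with the weighted inner product---ensuring that the monotonicity step is applied to the correct weighted residuals and that Cauchy--Schwarz is used consistently in the $D_1$-product---together with the final conversion between $\|\cdot\|_{D_1}$ and $\|\cdot\|$, where the factor $m$ enters on both sides and must be tracked so that it accumulates to the Lipschitz constant $m$ rather than $m^2$ or $\sqrt{m}$.
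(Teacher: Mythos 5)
Your proof is correct, and it deviates from the paper's argument in two ways worth noting. For the Lipschitz part you and the paper share the key step (the optimality condition $\frac{1}{\lambda}D_1(x_i-p_i)\in\partial g(p_i)$ plus monotonicity of $\partial g$), but you then apply Cauchy--Schwarz in the $D_1$-weighted inner product, obtaining nonexpansiveness $\|p_1-p_2\|_{D_1}\le\|x_1-x_2\|_{D_1}$ and hence the Euclidean Lipschitz constant $m$; the paper instead estimates $\langle D_1(x_1-x_2),p_1-p_2\rangle\le m\|x_1-x_2\|\,\|p_1-p_2\|$ and $\langle D_1(p_1-p_2),p_1-p_2\rangle\ge\frac{1}{m}\|p_1-p_2\|^2$ directly in the Euclidean norm, which yields the weaker constant $m^2$. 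So your bookkeeping in the weighted metric buys a sharper constant (and either constant suffices for the lemma as stated). For single-valuedness you argue via strong convexity of $w\mapsto g(w)+\frac{1}{2\lambda}\|w-x\|_{D_1}^2$ (modulus at least $\frac{1}{\lambda m}$), which simultaneously gives existence of the minimizer -- something the paper takes for granted; the paper instead supposes two distinct values $z_1\neq z_2$ of the prox and derives $\langle D_1z_2-D_1z_1,z_1-z_2\rangle\ge 0$ from monotonicity of $\partial g$, contradicting positive definiteness of $D_1$. Both routes are legitimate; yours is arguably cleaner because it also settles well-definedness, while the paper's stays entirely within the monotone-operator framework it uses elsewhere.
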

\begin{proof}
Indeed, it follows from the optimal condition that
\begin{eqnarray*}
0\in\partial g(prox_{\lambda g}^{D_1}(x))+\frac{D_1(prox_{\lambda g}^{D_1}(x)-x)}{\lambda},
\end{eqnarray*}
which implies that $ prox_{\lambda g}^{D_1}(x)=(D_1+\lambda \partial g)^{-1}D_1x$,
and then  $\frac{D_1x-D_1(prox_{\lambda g}^{D_1}(x))}{\lambda}\in \partial g(prox_{\lambda g}^{D_1}(x)).$ Similarly, we have $\frac{D_1y-D_1(prox_{\lambda g}^{D_1}(y))}{\lambda}\in \partial g(prox_{\lambda g}^{D_1}(y)).$
\vskip1mm
Taking into account the maximality of $\partial g$, we have 
\begin{eqnarray*}
\langle D_1x-D_1(prox_{\lambda g}^{D_1}(x))-D_1y+D_1(prox_{\lambda g}^{D_1}(y)),prox_{\lambda g}^{D_1}(x)- prox_{\lambda g}^{D_1}(y)\rangle \geq 0,
\end{eqnarray*}
which amounts to 
\begin{eqnarray*}
&&\langle D_1x-D_1y,prox_{\lambda g}^{D_1}(x)- prox_{\lambda g}^{D_1}(y)\rangle\\
&\geq& \langle D_1(prox_{\lambda g}^{D_1}(x))-D_1(prox_{\lambda g}^{D_1}(y)),prox_{\lambda g}^{D_1}(x)-prox_{\lambda g}^{D_1}(y)\rangle\\
&\geq& \frac{1}{m}\|prox_{\lambda g}^{D_1}(x)- prox_{\lambda g}^{D_1}(y)\|^2.
\end{eqnarray*}
And by the assumption on $D_i$, we have  
\begin{eqnarray*}
\langle D_1x-D_1y,prox_{\lambda g}^{D_1}(x)- prox_{\lambda g}^{D_1}(y)\rangle\leq m\|x-y\|\cdot\|prox_{\lambda g}^{D_1}(x)- prox_{\lambda g}^{D_1}(y)\|,
\end{eqnarray*}
hence 
\begin{eqnarray*}
\|prox_{\lambda g}^{D_1}(x)- prox_{\lambda g}^{D_1}(y)\|\leq m^2\|x-y\|,
\end{eqnarray*}
which reveals that the Moreau proximal operator $prox_{\lambda g}^{D_1}(\cdot)$ is Lipschitzian, also for $prox_{\mu f}^{D_2}(\cdot)$.
\vskip1mm
In addition, for any $x\in \mathbb{R}^n$, there exists a unique $z\in\mathbb{R}^n$ such that $z=prox_{\lambda g}^{D_1}(x)$. Otherwise, assume that $z_1\neq z_2$ in $\mathbb{R}^n$ such that
\begin{eqnarray*}
z_1=prox_{\lambda g}^{D_1}(x)=(D_1+\lambda \partial g)^{-1}D_1x\Rightarrow D_1x-D_1z_1\in\lambda \partial g(z_1),\\
z_2=prox_{\lambda g}^{D_1}(x)=(D_1+\lambda \partial g)^{-1}D_1x\Rightarrow D_1x-D_1z_2\in\lambda \partial g(z_2).
\end{eqnarray*}
Notice that $\partial g$ is maximal monotone, so we have $\langle D_1z_2-D_1z_1,z_1-z_2\rangle\geq 0$, which is contradict to the positive definitity of $D$. So $prox_{\lambda g}^{D_1}(\cdot)$ and $prox_{\mu f}^{D_2}(\cdot)$ are single-valued.
\end{proof}

\begin{lemma}\label{lem:3.2}
 Let $\Phi_{\lambda,\mu}: \mathbb{R}^n\rightarrow R$ be defined as in (\ref{3.2}), then 
\vskip1mm
(i)  $\Phi_{\lambda,\mu}$ is continuously differentiable on $dom(\Phi)$ and
\begin{eqnarray*}
\nabla\Phi_{\lambda,\mu}(x) =\frac{D_2(prox_{\mu f}^{D_2}(x)-x)}{\mu}- \frac{D_1(prox_{\lambda g}^{D_1}(x)-x)}{\lambda}.
\end{eqnarray*}
\vskip1mm 

(ii)  If $\lambda\geq m^2\mu$, then $\inf \Phi_{\lambda,\mu}(x)\geq\inf \Phi(x)$ and $\Phi_{\lambda,\mu}$ is evaluated as
\begin{eqnarray*}
&&\Phi(prox_{\lambda g}^{D_1}(x))+(\frac{1}{2m\lambda}-\frac{m}{2\mu})\|prox_{\lambda g}^{D_1}(x)-x\|^2\leq\Phi_{\lambda,\mu}(x)\\
&&\leq\Phi(prox_{\mu f}^{D_2}(x))+(\frac{m}{2\lambda}-\frac{1}{2m\mu})\|prox_{\mu f}^{D_2}(x)-x\|^2.
\end{eqnarray*}
(iii) $\nabla\Phi_{\lambda,\mu}(\cdot)$ is $\eta$-Lipschitz continuous, where $\eta$ is defined later.
\end{lemma}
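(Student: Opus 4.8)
The plan is to handle the three parts in order, leaning on the variational structure of the Moreau envelopes and on the single-valuedness and Lipschitz continuity of the proximal maps already established in Lemma~\ref{lem:3.1}. Throughout I abbreviate $p_g := prox_{\lambda g}^{D_1}(x)$ and $p_f := prox_{\mu f}^{D_2}(x)$.

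For (i) I would first record that the infimum defining $g_{\lambda,D_1}(x)$ is attained at the single point $p_g$, and likewise $f_{\mu,D_2}(x)$ at $p_f$. I would then apply the standard envelope (Danskin-type) differentiation: writing $g_{\lambda,D_1}(x)=g(w)+\frac{1}{2\lambda}\|w-x\|_{D_1}^2$ with $w=p_g$ and differentiating only the explicit quadratic dependence on $x$ (the contribution coming through the optimal $w$ vanishing by first-order optimality), I get $\nabla g_{\lambda,D_1}(x)=-\frac{1}{\lambda}D_1(p_g-x)$, and symmetrically $\nabla f_{\mu,D_2}(x)=-\frac{1}{\mu}D_2(p_f-x)$. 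Subtracting yields the stated gradient of $\Phi_{\lambda,\mu}$. Continuity of the gradient is then immediate, since by Lemma~\ref{lem:3.1} the maps $p_g,p_f$ depend Lipschitz-continuously on $x$ and $D_1,D_2$ are fixed linear operators.

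For (ii) the engine is a cross-substitution argument. To obtain the upper estimate I would use $p_f$ as a feasible competitor in the infimum defining $g_{\lambda,D_1}(x)$, giving $g_{\lambda,D_1}(x)\le g(p_f)+\frac{1}{2\lambda}\|p_f-x\|_{D_1}^2$, while $f_{\mu,D_2}(x)=f(p_f)+\frac{1}{2\mu}\|p_f-x\|_{D_2}^2$ holds by the very definition of $p_f$; subtracting and bounding the two quadratic forms by $\|\cdot\|_{D_1}^2\le m\|\cdot\|^2$ and $\|\cdot\|_{D_2}^2\ge\frac{1}{m}\|\cdot\|^2$ from \eqref{Eq2.1} produces the upper bound with coefficient $\frac{m}{2\lambda}-\frac{1}{2m\mu}$. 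The lower estimate is completely symmetric: I substitute $p_g$ into the infimum defining $f_{\mu,D_2}(x)$, apply the reverse eigenvalue bounds, and reach $\Phi(p_g)+(\frac{1}{2m\lambda}-\frac{m}{2\mu})\|p_g-x\|^2$. Finally, to read off $\inf\Phi_{\lambda,\mu}\ge\inf\Phi$, I would combine the lower estimate with $\Phi(p_g)\ge\inf\Phi$; here the parameter restriction enters in an essential way, since discarding the squared term requires its coefficient $\frac{1}{2m\lambda}-\frac{m}{2\mu}$ to be nonnegative, and this is exactly the role of the prescribed relation between $\lambda$, $\mu$ and $m$.

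For (iii) I would argue directly from the gradient formula of (i), with no further differentiation. Writing the difference $\nabla\Phi_{\lambda,\mu}(x)-\nabla\Phi_{\lambda,\mu}(y)$ and regrouping, it splits into the two blocks $\frac{1}{\mu}D_2[(p_f(x)-p_f(y))-(x-y)]$ and $\frac{1}{\lambda}D_1[(p_g(x)-p_g(y))-(x-y)]$; bounding the operator norms of $D_1,D_2$ by $m$, using the triangle inequality, and inserting the proximal Lipschitz constant $m^2$ from Lemma~\ref{lem:3.1} gives $\|\nabla\Phi_{\lambda,\mu}(x)-\nabla\Phi_{\lambda,\mu}(y)\|\le m(m^2+1)(\frac{1}{\lambda}+\frac{1}{\mu})\|x-y\|$, which identifies the constant $\eta$. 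The delicate step is (ii): one must track carefully which proximal point is substituted into which envelope and, correspondingly, in which direction each eigenvalue inequality of \eqref{Eq2.1} is applied, since the wrong choice flips the sign of the quadratic coefficient and invalidates the comparison $\inf\Phi_{\lambda,\mu}\ge\inf\Phi$; parts (i) and (iii), by contrast, are routine once Lemma~\ref{lem:3.1} is available.
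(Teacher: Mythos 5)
Your treatment of part (i), of the two sandwich inequalities in part (ii), and of part (iii) follows essentially the paper's own proof: envelope differentiation at the unique proximal point (whose single-valuedness and Lipschitz continuity come from Lemma~\ref{lem:3.1}), cross-substitution of $prox_{\mu f}^{D_2}(x)$ into the infimum defining $g_{\lambda,D_1}(x)$ and of $prox_{\lambda g}^{D_1}(x)$ into the one defining $f_{\mu,D_2}(x)$ combined with the eigenvalue bounds (\ref{Eq2.1}), and the triangle-inequality estimate with the $m^2$-Lipschitz constant of the proximal maps yielding $\eta=(\frac{1}{\lambda}+\frac{1}{\mu})(m+m^3)$.

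There is, however, a genuine gap in your derivation of $\inf \Phi_{\lambda,\mu}(x)\geq\inf \Phi(x)$. You propose to discard the quadratic term in the lower sandwich bound on the grounds that its coefficient $\frac{1}{2m\lambda}-\frac{m}{2\mu}$ is nonnegative, and you attribute this to ``the prescribed relation between $\lambda$, $\mu$ and $m$.'' But the prescribed relation is $\lambda\geq m^2\mu$ with $m\geq 1$, which gives $\frac{1}{2m\lambda}\leq\frac{1}{2m^3\mu}\leq\frac{m}{2\mu}$, so the coefficient is nonpositive (it vanishes only when $m=1$ and $\lambda=\mu$). The quadratic term therefore points in the wrong direction and cannot be dropped; your argument would need the reverse condition $\mu\geq m^2\lambda$. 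The paper proves this part by a different route: it works at the minimizer $p_x$ of $\Phi_{\lambda,\mu}$, where the optimality condition forces $\nabla g_{\lambda,D_1}(p_x)=\nabla f_{\mu,D_2}(p_x)=:z$, so that the two quadratic contributions combine into $\frac{\lambda}{2}\|z\|^2_{D_1^{-1}}-\frac{\mu}{2}\|z\|^2_{D_2^{-1}}\geq\bigl(\frac{\lambda}{2m}-\frac{m\mu}{2}\bigr)\|z\|^2$, and it is exactly the hypothesis $\lambda\geq m^2\mu$ that makes this coefficient nonnegative so the term can be discarded. You should either reproduce an argument of that type, or otherwise supply a proof of $\inf\Phi_{\lambda,\mu}\geq\inf\Phi$ that is compatible with $\lambda\geq m^2\mu$; as written, this step of your proposal fails.
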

\begin{proof}
(i) Since
$$g_{\lambda,D_1}(x)=g(prox_{\lambda g}^{D_1}(x))+\frac{1}{2\lambda}\|prox_{\lambda g}^{D_1}(x)-x\|^2_{D_1},$$
and
$$f_{\mu,D_2}(x)=f(prox_{\mu f}^{D_2}(x))+\frac{1}{2\mu}\|prox_{\mu f}^{D_2}(x)-x\|^2_{D_2},$$
we have
\begin{eqnarray*}
\Phi_{\lambda,\mu}(x)= g(prox_{\lambda g}^{D_1}(x))+\frac{1}{2\lambda}\|prox_{\lambda g}^{D_1}(x)-x\|^2_{D_1}-\{f(prox_{\mu f}^{D_2}(x))+\frac{1}{2\mu}\|prox_{\mu f}^{D_2}(x)-x\|^2_{D_2}\},
\end{eqnarray*}
and then
\begin{eqnarray*}
\nabla\Phi_{\lambda,\mu}(x)=\frac{D_1(x-prox_{\lambda g}^{D_1}(x))}{\lambda}-\frac{D_2(x-prox_{\mu f}^{D_2}(x))}{\mu}.
\end{eqnarray*}

Moreover,
\begin{eqnarray*}
&&\langle \nabla\Phi_{\lambda,\mu}(x)-\nabla\Phi_{\lambda,\mu}(y), x-y\rangle\\
&=&\langle \frac{D_2(prox_{\mu f}^{D_2}(x)-prox_{\mu f}^{D_2}(y))}{\mu},x-y\rangle- \langle\frac{D_1(prox_{\lambda g}^{D_1}(x)-prox_{\lambda g}^{D_1}(y))}{\lambda},x-y\rangle\\
&& +\langle \frac{D_1x-D_1y}{\lambda}- \frac{D_2x-D_2y}{\mu}, x-y\rangle.
\end{eqnarray*}
Since $(\frac{1}{\lambda m}-\frac{m}{\mu})\|x-y\|^2\leq\langle \frac{D_1x-D_1y}{\lambda}- \frac{D_2x-D_2y}{\mu}, x-y\rangle\leq (\frac{m}{\lambda}-\frac{1}{m\mu})\|x-y\|^2$,  we have
\begin{eqnarray*}
&&\frac{\|prox_{\mu f}^{D_2}(x)-prox_{\mu f}^{D_2}(y)\|^2}{m\mu}-\frac{m^3}{\lambda}\|x-y\|^2+(\frac{1}{\lambda m}-\frac{m}{\mu})\|x-y\|^2\\
&\leq&\langle \nabla\Phi_{\lambda,\mu}(x)-\nabla\Phi_{\lambda,\mu}(y), x-y\rangle\\
&\leq&\frac{m^3 \|x-y\|^2}{\mu}- \frac{1}{m\lambda}\| prox_{\lambda g}^{D_1}(x)-prox_{\lambda g}^{D_1}(y)\|^2+(\frac{m}{\lambda}-\frac{1}{m\mu})\|x-y\|^2,
\end{eqnarray*}
which means that 
\begin{eqnarray*}
\frac{-m^4\mu+\mu-\lambda m^2}{\lambda\mu m}\|x-y\|^2
\leq\langle \nabla\Phi_{\lambda,\mu}(x)-\nabla\Phi_{\lambda,\mu}(y), x-y\rangle
\leq\frac{\lambda m^4+\mu m^2-\lambda}{\lambda\mu m}\|x-y\|^2.
\end{eqnarray*}

Denote $\eta_1=\max\{|\frac{-m^4\mu+\mu-\lambda m^2}{\lambda\mu m}|,|\frac{\lambda m^4+\mu m^2-\lambda}{\lambda\mu m}|\}$, then we have 
\begin{eqnarray*}
|\langle \nabla\Phi_{\lambda,\mu}(x)-\nabla\Phi_{\lambda,\mu}(y), x-y\rangle|\leq\eta_1\|x-y\|^2.
\end{eqnarray*}
\vskip1mm
(ii) From the optimal condition, $p_x\in argmin_{x\in\mathbb{R}^n}\Phi_{\lambda,\mu}(x)$ amounts to $0=\nabla g_{\lambda,D_1}(p_x)- \nabla f_{\mu,D_2}(p_x)$, namely, 
\begin{eqnarray*}
0 =\frac{D_2(prox_{\mu f}^{D_2}(x)-p_x)}{\mu}- \frac{D_1(prox_{\lambda g}^{D_1}
(x)-p_x)}{\lambda},
\end{eqnarray*}
  which yields that 
\begin{eqnarray*}
p_x=\mu\lambda (\mu D_1-\lambda D_2)^{-1}(\frac{D_2(prox_{\mu f}^{D_2}(x)}{\mu}-\frac{D_1(prox_{\lambda g}^{D_1}(x)}{\lambda}),
\end{eqnarray*}
hence
\begin{eqnarray}\label{Eq:3.3}
\nonumber\inf\Phi_{\lambda,\mu}(x)&=&g(prox_{\lambda g}^{D_1}(x))-f(prox_{\mu f}^{D_2}(x))+\frac{\lambda}{2}\|z\|^2_{D_1^{-1}}-\frac{\mu}{2}\|z\|^2_{D_2^{-1}}\\\nonumber
&\geq&g(prox_{\lambda g}^{D_1}(x))-f(prox_{\mu f}^{D_2}(x))+(\frac{\lambda}{2m}-\frac{m\mu}{2})\|z\|^2\\\nonumber
&\geq&g(prox_{\lambda g}^{D_1}(x))-f(prox_{\mu f}^{D_2}(x)=\inf g(x)-\inf f(x)\\
&\geq& \inf \Phi(x),
\end{eqnarray}
where $z=\frac{D_2(prox_{\mu f}^{D_2}(x)-p_x)}{\mu}= \frac{D_1(prox_{\lambda g}^{D_1}
(x)-p_x)}{\lambda}$.
\vskip1mm
In addition, we have 
\begin{eqnarray*}
\Phi_{\lambda,\mu}(x)\geq g_{\lambda}(x)-\{f(y)+\frac{1}{2\mu}\|y-x\|^2_{D_2}\},y\in  \mathbb{R}^n,
\end{eqnarray*}
which holds for $y=prox_{\lambda g}^{D_1}(x)$, that is,
\begin{eqnarray*}
\Phi_{\lambda,\mu}(x)&\geq& g(prox_{\lambda g}^{D_1}(x))+\frac{1}{2\lambda}\|prox_{\lambda g}^{D_1}(x)-x\|^2_{D_1}\\
&&-\{f(prox_{\lambda g}^{D_1}(x))+\frac{1}{2\mu}\|prox_{\lambda g}^{D_1}(x)-x\|^2_{D_2}\}\\
&\geq&\Phi(prox_{\lambda g}^{D_1}(x))+(\frac{1}{2m\lambda}-\frac{m}{2\mu})\|prox_{\lambda g}^D(x)-x\|^2.
\end{eqnarray*}
\vskip1mm
On the other hand,  
\begin{eqnarray*}
\Phi_{\lambda,\mu}(x)\leq\{g(y)+\frac{1}{2\lambda}\|y-x\|^2_{D_1} \}-f_{\mu}(x),y\in  \mathbb{R}^n,
\end{eqnarray*}
which holds for $y=prox_{\mu f}^{D_2}(x)$, namely,
\begin{eqnarray*}
\Phi_{\lambda,\mu}(x)&\leq&g(prox_{\mu f}^{D_2}(x))+\frac{1}{2\lambda}\|prox_{\mu f}^{D_2}(x)-x\|^2_{D_1}\\
&&-f(prox_{\mu f}^{D_2}(x))-\frac{1}{2\mu}\|prox_{\mu f}^{D_2}(x)-x\|^2_{D_2}\\
&=&\Phi(prox_{\mu f}^{D_2}(x))+(\frac{m}{2\lambda}-\frac{1}{2m\mu})\|prox_{\mu f}^{D_2}(x)-x\|^2,
\end{eqnarray*}
this proves the thesis.

(iii)
\begin{eqnarray*}
 &&\|\nabla\Phi_{\lambda,\mu}(x)-\nabla\Phi_{\lambda,\mu}(x)\|\\
&=&\|\frac{D_1(x-y)}{\lambda}-\frac{D_2(x-y)}{\mu}-\frac{D_1(prox_{\lambda g}^{D_1}(x)-prox_{\lambda g}^{D_1}(y))}{\lambda}
+\frac{D_2(prox_{\mu f}^{D_2}(x)-prox_{\mu f}^{D_2}(y))}{\mu}\|\\
&\leq& (\frac{1}{\lambda}+\frac{1}{\mu})m\|x-y\|+\frac{\|D_1\|}{\lambda}\|prox_{\lambda g}^{D_1}(x)-prox_{\lambda g}^{D_1}(y)\|+\frac{\|D_2\|}{\mu}\|prox_{\mu f}^{D_2}(x)-prox_{\mu f}^{D_2}(y)\|\\
&\leq& (\frac{1}{\lambda}+\frac{1}{\mu})(m+m^3)\cdot\|x-y\|.
\end{eqnarray*}
Denote $\eta=(\frac{1}{\lambda}+\frac{1}{\mu})(m+m^3)$, then $\nabla\Phi_{\lambda,\mu}(\cdot)$ is $\eta$-Lipschitzian.
\end{proof}

{\bf Remark 2} (1) If the functions  $g_{\lambda,D}(z,x)$ and $f_{\mu,D}(z,x)$ are defined as in Definition\ref{Def:2.3},
we have 
\begin{eqnarray}\label{3.3}
\Phi_{\lambda,\mu}(x)=\Phi(x)+g_{\lambda,D_1}(prox_{\lambda g}^{D_1}x,x)-f_{\mu,D_2}(prox_{\mu f}^{D_2}x,x),
\end{eqnarray}
and then 
\begin{eqnarray}\label{3.4}
\Phi_{\lambda,\mu}(prox_{\lambda g}^{D_1}x)=\Phi(prox_{\lambda g}^{D_1}x)-f_{\mu,D_2}(prox_{\mu f}^{D_2}x,prox_{\lambda g}^{D_1}x).
\end{eqnarray}
(2) From Lemma \ref{lem:2.4},  if $z$ is the $\epsilon-$approximation of $prox_{\lambda g}^{D_1}x$, then $$\|z-prox_{\lambda g}^{D_1}x\|^2\leq 2m\lambda\epsilon.$$
Likely, if $z$ is the $\epsilon-$approximation of $prox_{\mu f}^{D_2}x$, then we have 
\begin{eqnarray*}
\|z-prox_{\mu f}^{D_2}x\|^2\leq 2m\mu\epsilon.
\end{eqnarray*}

(3) If $\lambda=\mu$, then  we have 
\begin{eqnarray*}
&&\Phi(prox_{\lambda g}^{D_1}(x))+\frac{1}{2\lambda}(\frac{1}{m}-m)\|prox_{\lambda g}^{D_1}(x)-x\|^2\\&\leq&\Phi_{\lambda,\lambda}(x)\leq\Phi(prox_{\lambda f}^{D_2}(x))+\frac{1}{2\lambda}(m-\frac{1}{m})\|prox_{\lambda f}^{D_2}(x)-x\|^2,
\end{eqnarray*}
and $\nabla\Phi_{\lambda,\lambda}(x) =\frac{D_2(prox_{\lambda f}^{D_2}(x)-D_1(prox_{\lambda g}^{D_1}(x))}{\lambda}$  is $\frac{2m+2m^3}{\lambda}$-Lipschitzian.
\vskip1mm
(4)  If $D_1=D_2\equiv I$ the identity operator, then our results recover that in Moudafi\cite{M2023}.
\vskip1mm
\subsection{Algorithms and approximation analysis}

\begin{algorithm}
\label{Alg:3.1}$\left. {}\right. $
\vskip1mm
{\bf  Algorithm 1 Inertial-Gradient Method}
\vskip1mm
{\rm \textbf{Initialization:} Give some sequence $\{\gamma_n\}\subset(0,1)$, choose  $\lambda>0, \mu>0$ and $ \lambda\geq m^2\mu$, $\frac{2\eta}{5\eta+2\eta_1}\leq \gamma<1$,  select arbitrary starting points $x_0,x_1\in \mathbb{R}^n$.
  \vskip 1mm
\textbf{Iterative Step:} Given the iterates $x_n$  for each $n\geq 1$,
 compute
\begin{eqnarray}\label{Eq:3.4}
\begin{cases}
w_n=x_n+\theta_n(x_n-x_{n-1})\\
 y_n=\nabla g_{\lambda,D_1}(w_n) =\frac{D_1(w_n-prox_{\lambda g}^{D_1}(w_n))}{\lambda},\\
z_n=\nabla f_{\mu,D_2}(w_n)=\frac{D_2(w_n-prox_{\mu f}^{D_2}(w_n))}{\mu}\\
x_{n+1}=x_n-\frac{\gamma}{\eta}(y_n-z_n),
\end{cases}
\end{eqnarray}
\textbf{Stopping Criterion:} If  $y_n=z_n$ then stop. Otherwise, set $n:=n+1$ and return to Iterative Step.}
\end{algorithm}

\begin{theorem}\label{Th:3.5}
 Suppose that ASSUMPTION (A1)-(A3) hold. Starting from $x_0,x_1\in \mathbb{R}^n$, we consider the iterates $(x_n, y_n, z_n)_{n\in \mathbb{N}}$ generated by Algorithm 1. Then, for every $n$, we have
\begin{eqnarray}\label{Eq:3.5}
\nonumber\Phi_{\lambda,\mu}(x_{n+1})&\leq&\Phi_{\lambda,\mu}(x_{n})+(\frac{\eta\theta_n}{2}-\frac{\eta}{\gamma}+\frac{\eta}{2})\|x_{n+1}-x_n\|^2\\
&&+(\eta_1\theta_n^2+\eta\theta_n^2+\frac{\eta\theta_n}{2})\|x_n-x_{n-1}\|^2,
\end{eqnarray}
where $\eta$ and $\eta_1$ are defined as in Lemma \ref{lem:3.2} (i) and (iii).
\end{theorem}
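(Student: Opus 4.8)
The plan is to read the update in (\ref{Eq:3.4}) as a single gradient step for the smooth function $\Phi_{\lambda,\mu}$ taken at the \emph{extrapolated} point $w_n$, and then to pay for the mismatch between $\nabla\Phi_{\lambda,\mu}(x_n)$ and $\nabla\Phi_{\lambda,\mu}(w_n)$ using the two regularity estimates of Lemma~\ref{lem:3.2}. First I would observe that by construction $y_n-z_n=\nabla g_{\lambda,D_1}(w_n)-\nabla f_{\mu,D_2}(w_n)=\nabla\Phi_{\lambda,\mu}(w_n)$, so the last line of (\ref{Eq:3.4}) reads $x_{n+1}-x_n=-\frac{\gamma}{\eta}\nabla\Phi_{\lambda,\mu}(w_n)$. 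Since $\nabla\Phi_{\lambda,\mu}$ is $\eta$-Lipschitz by Lemma~\ref{lem:3.2}(iii), the Descent Lemma~\ref{Lem:2.5} applied to the pair $(x_n,x_{n+1})$ yields
\begin{equation*}
\Phi_{\lambda,\mu}(x_{n+1})\leq\Phi_{\lambda,\mu}(x_n)+\langle\nabla\Phi_{\lambda,\mu}(x_n),x_{n+1}-x_n\rangle+\frac{\eta}{2}\|x_{n+1}-x_n\|^2,
\end{equation*}
which is the backbone of the estimate; everything else is a careful dissection of the inner-product term.

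Next I would split that inner product by inserting the gradient actually used in the step, writing $\nabla\Phi_{\lambda,\mu}(x_n)=\nabla\Phi_{\lambda,\mu}(w_n)+\big(\nabla\Phi_{\lambda,\mu}(x_n)-\nabla\Phi_{\lambda,\mu}(w_n)\big)$. The first piece is exact: substituting $x_{n+1}-x_n=-\frac{\gamma}{\eta}\nabla\Phi_{\lambda,\mu}(w_n)$ gives $\langle\nabla\Phi_{\lambda,\mu}(w_n),x_{n+1}-x_n\rangle=-\frac{\eta}{\gamma}\|x_{n+1}-x_n\|^2$, which produces the $-\frac{\eta}{\gamma}$ term. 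For the remaining piece I would further decompose $x_{n+1}-x_n=(x_{n+1}-w_n)+(w_n-x_n)$ and use the defining relation $w_n-x_n=\theta_n(x_n-x_{n-1})$ from (\ref{Eq:3.4}).

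The component along $w_n-x_n$ is handled by the monotonicity-type bound of Lemma~\ref{lem:3.2}(i): since $\langle\nabla\Phi_{\lambda,\mu}(x_n)-\nabla\Phi_{\lambda,\mu}(w_n),w_n-x_n\rangle=-\langle\nabla\Phi_{\lambda,\mu}(x_n)-\nabla\Phi_{\lambda,\mu}(w_n),x_n-w_n\rangle$ is bounded above by $\eta_1\|x_n-w_n\|^2=\eta_1\theta_n^2\|x_n-x_{n-1}\|^2$, this is exactly where the coefficient $\eta_1\theta_n^2$ enters. The component along $x_{n+1}-w_n$ is handled by Cauchy--Schwarz together with the $\eta$-Lipschitz estimate $\|\nabla\Phi_{\lambda,\mu}(x_n)-\nabla\Phi_{\lambda,\mu}(w_n)\|\leq\eta\theta_n\|x_n-x_{n-1}\|$; bounding $\|x_{n+1}-w_n\|\leq\|x_{n+1}-x_n\|+\theta_n\|x_n-x_{n-1}\|$ by the triangle inequality and applying Young's inequality $ab\leq\frac12 a^2+\frac12 b^2$ to the mixed product splits this contribution into $\frac{\eta\theta_n}{2}\|x_{n+1}-x_n\|^2$ and $\big(\eta\theta_n^2+\frac{\eta\theta_n}{2}\big)\|x_n-x_{n-1}\|^2$. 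Collecting the $-\frac{\eta}{\gamma}$ term, the $\frac{\eta}{2}$ term from the Descent Lemma, and these contributions reproduces the stated coefficients $\frac{\eta\theta_n}{2}-\frac{\eta}{\gamma}+\frac{\eta}{2}$ and $\eta_1\theta_n^2+\eta\theta_n^2+\frac{\eta\theta_n}{2}$.

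I expect the bookkeeping of the cross terms to be the main obstacle. One must route the \emph{signed} inner product through the two-sided bound of Lemma~\ref{lem:3.2}(i) rather than through the Lipschitz bound (otherwise an $\eta$ appears where $\eta_1$ is required), choose the split $x_{n+1}-x_n=(x_{n+1}-w_n)+(w_n-x_n)$ so that the monotonicity estimate applies to the correct factor, and calibrate Young's inequality so that precisely the shared coefficient $\frac{\eta\theta_n}{2}$ is distributed between the two squared-distance terms. Throughout I would assume $\theta_n\geq 0$ so that $\|\theta_n(x_n-x_{n-1})\|=\theta_n\|x_n-x_{n-1}\|$; beyond the $\eta$- and $\eta_1$-estimates already furnished by Lemma~\ref{lem:3.2}, no further properties of $\lambda$, $\mu$, or the matrices $D_i$ are needed for this single-step inequality.
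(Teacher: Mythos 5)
Your proposal is correct and follows essentially the same route as the paper's own proof: the Descent Lemma at $(x_n,x_{n+1})$, the substitution $x_{n+1}-x_n=-\frac{\gamma}{\eta}\nabla\Phi_{\lambda,\mu}(w_n)$, the split of the error term along $(w_n-x_n)+(x_{n+1}-w_n)$, the $\eta_1$ two-sided bound of Lemma~\ref{lem:3.2}(i) on the first piece, and Cauchy--Schwarz with the $\eta$-Lipschitz bound, the triangle inequality, and Young's inequality on the second, yielding exactly the stated coefficients. No discrepancy with the paper's argument.
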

\begin{proof}
It follows from the descent principle because of the continuous gradient of $\Phi_{\lambda,\mu}$ that
\begin{eqnarray*}
\Phi_{\lambda,\mu}(x_{n+1})&\leq&\Phi_{\lambda,\mu}(x_{n})+\langle x_{n+1}-x_n,\nabla \Phi_{\lambda,\mu}(x_{n})\rangle+\frac{\eta}{2}\|x_{n+1}-x_n\|^2. 
\end{eqnarray*}
From $x_{n+1}=x_n-\frac{\gamma}{\eta}(y_n-z_n)=x_n-\frac{\gamma}{\eta}\nabla \Phi_{\lambda,\mu}(w_{n})$, we have 
\begin{eqnarray*}\label{3.}
\nonumber\langle x_{n+1}-x_n,\nabla \Phi_{\lambda,\mu}(x_{n})\rangle&=&\langle x_{n+1}-x_n,\nabla \Phi_{\lambda,\mu}(w_{n})\rangle+\langle x_{n+1}-x_n,\nabla \Phi_{\lambda,\mu}(x_{n})-\nabla\Phi_{\lambda,\mu}(w_{n})\rangle\\\nonumber
&=&-\frac{\eta}{\gamma}\|x_{n+1}-x_n\|^2+\langle w_n-x_n,\nabla \Phi_{\lambda,\mu}(x_{n})-\nabla\Phi_{\lambda,\mu}(w_{n})\rangle\\\nonumber
&&+\langle x_{n+1}-w_n,\nabla \Phi_{\lambda,\mu}(x_{n})-\nabla\Phi_{\lambda,\mu}(w_{n})\rangle\\\nonumber
&\leq&-\frac{\eta}{\gamma}\|x_{n+1}-x_n\|^2+\eta_1\|w_n-x_n\|^2\\\nonumber
&&+(\|x_{n+1}-x_n\|+\theta_n\|x_n-x_{n-1}\|)\cdot\eta\|x_n-w_n\|\\
&=&-\frac{\eta}{\gamma}\|x_{n+1}-x_n\|^2+\eta_1\theta_n^2\|x_n-x_{n-1}\|^2\\\nonumber
&&+(\|x_{n+1}-x_n\|+\theta_n\|x_n-x_{n-1}\|)\cdot\eta \theta_n\|x_n-x_{n-1}\|\\
&\leq&(\frac{\eta\theta_n}{2}-\frac{\eta}{\gamma})\|x_{n+1}-x_n\|^2+(\eta_1\theta_n^2+\eta\theta_n^2+\frac{\eta\theta_n}{2})\|x_n-x_{n-1}\|^2,
\end{eqnarray*}
which completes (\ref{Eq:3.5}).
\end{proof}
\begin{theorem}\label{Th:3.6}
 Suppose that ASSUMPTION  (A1)-(A3) hold. Starting from $x_0,x_1\in \mathbb{R}^n$, we consider the iterates $(x_n, y_n, z_n)_{n\in \mathbb{N}}$ generated by Algorithm 1. If the parameter $\theta_n$ satisfies $0<\theta_n\leq \frac{2(1-\gamma)\eta\gamma_n}{\gamma(2\eta_1+3\eta)}$, then the sequence $\{x_n\}$ is asymptotically regular and 
\begin{eqnarray*}
\sum_{n=0}^\infty\|x_{n+1}-x_n\|^2<\infty.
\end{eqnarray*}

Moreover,  the sequence $\{x_n\}$ is bounded.
\end{theorem}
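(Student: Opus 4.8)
The plan is to turn the one-step estimate of Theorem \ref{Th:3.5} into the monotone decay of a Lyapunov functional, and then to read off summability and boundedness from that decay. First I would rewrite (\ref{Eq:3.5}) as
\begin{eqnarray*}
\Phi_{\lambda,\mu}(x_{n+1})\leq\Phi_{\lambda,\mu}(x_n)+A_n\|x_{n+1}-x_n\|^2+B_n\|x_n-x_{n-1}\|^2,
\end{eqnarray*}
with $A_n=\frac{\eta\theta_n}{2}-\frac{\eta}{\gamma}+\frac{\eta}{2}$ and $B_n=(\eta_1+\eta)\theta_n^2+\frac{\eta\theta_n}{2}$. Since $\{\gamma_n\}\subset(0,1)$, the hypothesis $0<\theta_n\leq\frac{2(1-\gamma)\eta\gamma_n}{\gamma(2\eta_1+3\eta)}$ forces $\theta_n\leq\bar\theta:=\frac{2(1-\gamma)\eta}{\gamma(2\eta_1+3\eta)}$, so $\{A_n\}$ and $\{B_n\}$ are bounded. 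I would then set
\begin{eqnarray*}
E_n=\Phi_{\lambda,\mu}(x_n)+\xi\|x_n-x_{n-1}\|^2,\qquad \xi:=\sup_k B_k<\infty,
\end{eqnarray*}
and compute $E_{n+1}-E_n\leq(A_n+\xi)\|x_{n+1}-x_n\|^2+(B_n-\xi)\|x_n-x_{n-1}\|^2\leq(A_n+\xi)\|x_{n+1}-x_n\|^2$, the last inequality because $B_n-\xi\leq0$.

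The crucial step is to produce a fixed $\delta>0$ with $A_n+\xi\leq-\delta$, i.e. $\sup_k A_k+\sup_k B_k<0$. As $A$ and $B$ are increasing in $\theta$, it suffices to check this at $\theta=\bar\theta$, namely $(\eta_1+\eta)\bar\theta^2+\eta\bar\theta<\frac{\eta(2-\gamma)}{2\gamma}$. Writing $s:=\bar\theta$ and using the initialization requirement $\frac{2\eta}{5\eta+2\eta_1}\leq\gamma<1$, which is precisely $\gamma(2\eta_1+3\eta)\geq2\eta(1-\gamma)$, i.e. $s\leq1$, a short simplification (the left side collapses to $\frac{\eta s}{\gamma}-\frac{\eta s^2}{2}$) reduces the desired inequality to $s\leq\frac{2-\gamma}{\gamma}$; since $s\leq1<\frac{2-\gamma}{\gamma}$ this holds, and strictly whenever $s<1$, which is guaranteed by $\gamma_n<1$. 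Hence $E_{n+1}\leq E_n-\delta\|x_{n+1}-x_n\|^2$ and $\{E_n\}$ is nonincreasing.

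To close the argument I would invoke Lemma \ref{lem:3.2}(ii): under $\lambda\geq m^2\mu$ one has $\inf\Phi_{\lambda,\mu}\geq\inf\Phi$, and (A2) gives $\inf\Phi>-\infty$, so $E_n\geq\Phi_{\lambda,\mu}(x_n)\geq\inf\Phi$ is bounded below. A nonincreasing, bounded-below sequence converges, and telescoping $E_{n+1}\leq E_n-\delta\|x_{n+1}-x_n\|^2$ yields $\delta\sum_{n\geq1}\|x_{n+1}-x_n\|^2\leq E_1-\lim_n E_n<\infty$, which is exactly the asserted summability; in particular $\|x_{n+1}-x_n\|\to0$, establishing asymptotic regularity.

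For boundedness, the decay gives $\Phi_{\lambda,\mu}(x_n)\leq E_n\leq E_1$ for every $n$, so $\{x_n\}$ remains in a sublevel set of $\Phi_{\lambda,\mu}$; boundedness then follows once $\Phi_{\lambda,\mu}$ is coercive, which I would derive from the coercivity of $\Phi$ in (A2) together with the lower bound in Lemma \ref{lem:3.2}(ii). I expect the two genuine obstacles to be (a) the coefficient bookkeeping that converts the initialization bound on $\gamma$ and the step bound on $\theta_n$ into the strict negativity of $A_n+\xi$, and (b) transferring coercivity from $\Phi$ to $\Phi_{\lambda,\mu}$, since the lower estimate in Lemma \ref{lem:3.2}(ii) carries a negative quadratic term $(\frac{1}{2m\lambda}-\frac{m}{2\mu})\|prox_{\lambda g}^{D_1}(x)-x\|^2$ that must be controlled before the envelope's coercivity can be concluded.
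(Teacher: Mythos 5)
Your route to the summability statement is correct in substance but genuinely different from the paper's. You build the Lyapunov energy $E_n=\Phi_{\lambda,\mu}(x_n)+\xi\|x_n-x_{n-1}\|^2$ with $\xi=\sup_k B_k$ and derive a uniform decrease $E_{n+1}\leq E_n-\delta\|x_{n+1}-x_n\|^2$, then telescope. The paper instead divides the estimate (\ref{Eq:3.5}) by the (negative) coefficient of $\|x_{n+1}-x_n\|^2$ to obtain the recursion $\psi_{n+1}\leq\gamma_n\psi_n+\delta_n$, with $\psi_{n+1}=\|x_{n+1}-x_n\|^2$ and $\delta_n$ proportional to the telescoping difference $\Phi_{\lambda,\mu}(x_n)-\Phi_{\lambda,\mu}(x_{n+1})$, iterates it, and then splits into two cases according to whether $\{\Phi_{\lambda,\mu}(x_n)\}$ is decreasing or nondecreasing. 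Both arguments rest on exactly the same ingredients: Theorem \ref{Th:3.5}, the step restriction on $\theta_n$ together with $\gamma\geq\frac{2\eta}{5\eta+2\eta_1}$, and the lower bound $\inf\Phi_{\lambda,\mu}\geq\inf\Phi>-\infty$ from Lemma \ref{lem:3.2}(ii) and (A2). Your energy argument is arguably the cleaner one, since it avoids the paper's case distinction (which is not even exhaustive), and your algebraic collapse of $(\eta_1+\eta)\bar\theta^2+\eta\bar\theta$ to $\frac{\eta\bar\theta}{\gamma}-\frac{\eta\bar\theta^2}{2}$ checks out.

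Two caveats. First, the strictness: $s=\bar\theta<1$ is \emph{not} guaranteed by $\gamma_n<1$; it holds precisely when $\gamma$ is strictly larger than $\frac{2\eta}{5\eta+2\eta_1}$. What $\gamma_n<1$ gives is $\theta_n\leq\gamma_n s<s$, hence $A_n+B_n<0$ for every $n$, but not a uniform $\delta>0$: if $\gamma$ equals its threshold (so $s=1$) and $\gamma_n\uparrow1$, then $A_n+\xi$ can approach $0$ and your telescoping no longer yields $\sum\|x_{n+1}-x_n\|^2<\infty$. The fix is to assume $\sup_n\gamma_n\leq r<1$, which is exactly what the paper inserts ``without loss of generality''; with it, $\sup_n\theta_n\leq rs<s$ and the monotonicity of $A(\theta)+B(\theta)$ in $\theta$ gives your uniform $\delta$. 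Second, the boundedness of $\{x_n\}$: you leave this as a plan, and the obstacle you flag is real. Lemma \ref{lem:3.2}(ii) gives only a comparison of infima, and its pointwise lower estimate carries the coefficient $\frac{1}{2m\lambda}-\frac{m}{2\mu}$, which is nonpositive under $\lambda\geq m^2\mu$ and $m\geq1$, so coercivity of $\Phi_{\lambda,\mu}$ does not follow directly. Be aware, however, that the paper's own proof does not resolve this either: it invokes the pointwise chain $\phi(\|x_n\|)+\beta\leq\Phi(x_n)\leq\Phi_{\lambda,\mu}(x_n)$, but the pointwise inequality $\Phi\leq\Phi_{\lambda,\mu}$ is never established in the paper (only $\inf\Phi\leq\inf\Phi_{\lambda,\mu}$ is). So your proposal matches the paper on this point in the sense that both leave the same step unjustified; a complete argument would require either proving a pointwise comparison between $\Phi$ and $\Phi_{\lambda,\mu}$ under the stated parameter relation, or establishing coercivity of $\Phi_{\lambda,\mu}$ by a separate estimate.
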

\begin{proof}
In view of the  setting $\frac{2\eta}{5\eta+2\eta_1}\leq\gamma<1$, we have $\theta_n\leq \gamma_n<1$, then it follows from (\ref{Eq:3.5}) that
\begin{eqnarray}\label{Eq:3.9}
\Phi_{\lambda,\mu}(x_{n+1})\leq\Phi_{\lambda,\mu}(x_{n})+(\eta-\frac{\eta}{\gamma})\|x_{n+1}-x_n\|^2+(\eta_1+\frac{3\eta}{2})\theta_n\|x_n-x_{n-1}\|^2,
\end{eqnarray}
therefore,
\begin{eqnarray*}
\|x_{n+1}-x_n\|^2&\leq&\frac{\gamma(\eta_1+\frac{3\eta}{2})}{\eta(1-\gamma)}\theta_n\|x_n-x_{n-1}\|^2+\frac{\gamma}{\eta(1-\gamma)}[\Phi_{\lambda,\mu}(x_{n})-\Phi_{\lambda,\mu}(x_{n+1})]\\
&\leq&\gamma_n\|x_n-x_{n-1}\|^2+\frac{\gamma}{\eta(1-\gamma)}[\Phi_{\lambda,\mu}(x_{n})-\Phi_{\lambda,\mu}(x_{n+1})].
\end{eqnarray*}
\vskip1mm
Since  $\gamma_n\in (0,1)$, without loss of generality, suppose that there exists a real number $r<1$ such that $\gamma_n\leq r$, we have 
\begin{eqnarray*}
\|x_{n+1}-x_n\|^2&\leq& r\|x_n-x_{n-1}\|^2+\frac{\gamma}{\eta(1-\gamma)}[\Phi_{\lambda,\mu}(x_{n})-\Phi_{\lambda,\mu}(x_{n+1})].
\end{eqnarray*}
Denote $\psi_{n+1}=\|x_{n+1}-x_n\|^2$, $\delta_n=\frac{\gamma}{\eta(1-\gamma)}[\Phi_{\lambda,\mu}(x_{n})-\Phi_{\lambda,\mu}(x_{n+1})]$, then we  have  
\begin{eqnarray*}
\psi_{n+1}&\leq& r\psi_{n}+\delta_n\\
&\leq& r(r\psi_{n-1}+\delta_{n-1})+\delta_n\\
&\cdots&\\
&\leq&r^n\psi_1+\sum_{n=0}^{n-1}r^i\delta_{n-i},
\end{eqnarray*}
therefore
\begin{eqnarray*}
\sum_{n=0}^\infty\psi_{n+1}&\leq&\frac{1}{1-r}(\psi_1+\sum_{n=0}^{\infty}\delta_{n}),
\end{eqnarray*}
Now we consider the following two cases.

\vskip1mm
{\bf Case I}. If $\{\Phi_{\lambda,\mu}(x_{n})\}$ is decreasing, from  Lemma \ref{lem:3.1} (i) and the lower bound of $\Phi$, $\Phi_{\lambda,\mu}(x_n)$ converges to some limit $\Phi^*$, which in turn guarentees that 
\begin{eqnarray*}
\sum_{n=0}^{\infty}\delta_{n}<\infty,
\end{eqnarray*}
Consequently,  we obtain $\sum_{n=0}^\infty\psi_{n+1}<\infty$, that is, $\sum_{n=0}^\infty\|x_{n+1}-x_n\|^2<\infty.$
\vskip1mm
 Moreover, it follows from ASSUMPTION ( A2) that $\inf \Phi(x)\geq \inf\phi(\|x\|)+\beta$,  combining with (\ref{Eq:3.3}), we have 
\begin{eqnarray*}
\inf\phi(\|x_n\|)+\beta\leq\inf \Phi(x_n)\leq \inf\Phi_{\lambda,\mu}(x_{n}).
\end{eqnarray*}
Taking the limit on $n\rightarrow\infty$, we have 
\begin{eqnarray*}
\lim_{n\rightarrow\infty}\inf\phi(\|x\|)+\beta\leq\lim_{n\rightarrow\infty}\inf \Phi(x)\leq \lim_{n\rightarrow\infty}\inf\Phi_{\lambda,\mu}(x_{n})=\Phi^*,
\end{eqnarray*}
 which entails that $\{x_n\}$ is bounded from the coercity of $\phi$.
\vskip1mm
{\bf Case II}.  If $\{\Phi_{\lambda,\mu}(x_{n})\}$ is nondecreasing,  then $\delta_{n}\leq 0$, and we  have  
\begin{eqnarray*}
\psi_{n+1}\leq r\psi_{n}\leq\cdots r^n\psi_1,
\end{eqnarray*}
 hence
\begin{eqnarray*}
\sum_{n=0}^\infty\psi_{n+1}&\leq&\frac{1}{1-r}\psi_1,
\end{eqnarray*}
namely, $\sum_{n=0}^\infty\|x_{n+1}-x_n\|^2<\infty.$
\vskip1mm
In addition, it turns out  from (\ref{Eq:3.9}) that
\begin{eqnarray*}
\Phi_{\lambda,\mu}(x_{n+1})-\Phi_{\lambda,\mu}(x_{n})\leq(\eta_1+\frac{3\eta}{2})\theta_n\|x_n-x_{n-1}\|^2,
\end{eqnarray*}
and  then
\begin{eqnarray*}
\sum_{n=1}^\infty[\Phi_{\lambda,\mu}(x_{n+1})-\Phi_{\lambda,\mu}(x_{n})]\leq(\eta_1+\frac{3\eta}{2})\sum_{n=1}^\infty\|x_n-x_{n-1}\|^2<\infty,
\end{eqnarray*}
which in turn entails that there exists the limit of $\Phi_{\lambda,\mu}(x_{n})$, and from the line of  Case I,  the sequence $\{x_n\}$ is bounded.
\end{proof}

\begin{theorem}\label{Th:3.3}
 Suppose that ASSUMPTION (A1)-(A3)  hold. Starting from $x_0,x_1\in \mathbb{R}^n$, we consider the iterates $(x_n, y_n, z_n)_{n\in \mathbb{N}}$ generated by Algorithm 1. If the sequence $\{x_n\}$ converges to a cluster point $x^*$, then 
$$\Phi(prox_{\mu f}^{D_2}(x^*))-\Phi(prox_{\lambda g}^{D_1}(x^*))\rightarrow 0,$$
as $\lambda D_1^{-1}$ is close enough to $\mu D_2^{-1}$.
\end{theorem}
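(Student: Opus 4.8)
The plan is to turn the convergence hypothesis into a stationarity statement for the smoothed problem, and then read off the difference $\Phi(prox_{\mu f}^{D_2}(x^*))-\Phi(prox_{\lambda g}^{D_1}(x^*))$ from the explicit gradient formula of Lemma \ref{lem:3.2}(i). First I would note that since $\{x_n\}$ converges to $x^*$ it is Cauchy, so $\|x_{n+1}-x_n\|\to 0$ (this is also the summability secured in Theorem \ref{Th:3.6}). The iterative step gives $x_{n+1}-x_n=-\frac{\gamma}{\eta}(y_n-z_n)=-\frac{\gamma}{\eta}\nabla\Phi_{\lambda,\mu}(w_n)$, so $\nabla\Phi_{\lambda,\mu}(w_n)\to 0$. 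Because $w_n=x_n+\theta_n(x_n-x_{n-1})$ with $\theta_n\le\gamma_n<1$ bounded and $\|x_n-x_{n-1}\|\to 0$, we have $w_n-x_n\to 0$, hence $w_n\to x^*$. Invoking that $\nabla\Phi_{\lambda,\mu}$ is $\eta$-Lipschitz (Lemma \ref{lem:3.2}(iii)) and passing to the limit yields $\nabla\Phi_{\lambda,\mu}(x^*)=0$, i.e. $x^*$ is a stationary point of the regularized problem (\ref{3.2}).

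Next I would spell out this stationarity condition via Lemma \ref{lem:3.2}(i). Writing $p:=prox_{\lambda g}^{D_1}(x^*)$, $q:=prox_{\mu f}^{D_2}(x^*)$ and introducing the common vector $v:=\frac{D_1(x^*-p)}{\lambda}=\frac{D_2(x^*-q)}{\mu}$, the identity $\nabla\Phi_{\lambda,\mu}(x^*)=0$ becomes $x^*-p=\lambda D_1^{-1}v$ and $x^*-q=\mu D_2^{-1}v$. Subtracting gives the key formula $p-q=(\mu D_2^{-1}-\lambda D_1^{-1})v$, whence $\|p-q\|\le\|\mu D_2^{-1}-\lambda D_1^{-1}\|\,\|v\|$. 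In the exact case $\lambda D_1^{-1}=\mu D_2^{-1}$ this already forces $p=q$ and so $\Phi(q)-\Phi(p)=0$; the claim is precisely the perturbative version of this exact identity.

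To finish I would let $\lambda D_1^{-1}\to\mu D_2^{-1}$ and show $\|p-q\|\to 0$, for which it suffices that $\|v\|$ stay bounded. From the proximal optimality (exactly as in the proof of Lemma \ref{lem:3.1}) one has $v\in\partial g(p)$, and assumption (A2) keeps the iterates—hence the limit $x^*$ and its proximal images $p,q$—inside a fixed bounded set on which the convex function $g$ is Lipschitz, so that $\partial g(p)$, and therefore $\|v\|$, is uniformly bounded. Thus $\|p-q\|\to 0$, and since $\Phi=g-f$ is a difference of convex functions it is locally Lipschitz on the interior of $dom\,g\subset dom\,f$, giving $|\Phi(q)-\Phi(p)|\le L\|p-q\|\to 0$, which is the assertion.

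The main obstacle I expect is the uniformity underlying the last paragraph: as $\lambda D_1^{-1}\to\mu D_2^{-1}$ the whole trajectory, its limit $x^*$, the points $p,q$, and the multiplier $v$ all vary with the parameters, so one must guarantee that they remain in a common compact region and that both the local Lipschitz constant $L$ of $\Phi$ and the bound on $\|v\|$ can be taken independently of the parameters; this is exactly where the coercivity in (A2) is essential. A secondary point to verify is that $q=prox_{\mu f}^{D_2}(x^*)$ belongs to $dom\,g$ so that $\Phi(q)$ is finite, which is automatic once $p$ and $q$ are close enough.
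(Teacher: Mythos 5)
Your proposal is correct and, for its first half, follows the same path as the paper: you pass from $x_n\to x^*$ to $\nabla\Phi_{\lambda,\mu}(w_n)\to 0$ and, by continuity of the gradient, to the stationarity identity $v:=\frac{D_1(x^*-prox_{\lambda g}^{D_1}(x^*))}{\lambda}=\frac{D_2(x^*-prox_{\mu f}^{D_2}(x^*))}{\mu}$, and then to $prox_{\lambda g}^{D_1}(x^*)-prox_{\mu f}^{D_2}(x^*)=(\mu D_2^{-1}-\lambda D_1^{-1})v$, hence $\|prox_{\lambda g}^{D_1}(x^*)-prox_{\mu f}^{D_2}(x^*)\|\to 0$; the paper states exactly this relation for the cluster point $y^*$ of $\{y_n\}$ (your derivation of it via $w_n\to x^*$ is actually more explicit than the paper's). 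Where you genuinely diverge is the last step: the paper closes the argument by a two-sided sandwich, using lower semicontinuity of $g$ and $f$ to get $\Phi(prox_{\mu f}^{D_2}(x^*))-\Phi(prox_{\lambda g}^{D_1}(x^*))\leq 0$ and its Remark~1 claim that $prox_{\lambda g}^{D_1}(x^*)$ and $prox_{\mu f}^{D_2}(x^*)$ minimize $g$ and $f$ to get the reverse inequality $\geq 0$, whereas you instead invoke local Lipschitz continuity of the finite convex functions $g,f$ (hence of $\Phi=g-f$) near the proximal points to conclude $|\Phi(q)-\Phi(p)|\leq L\|p-q\|\to 0$. Your route buys a quantitative estimate and avoids leaning on the minimality claim of Remark~1 (which is delicate, since a proximal point need not be a global minimizer of $g$ or $f$), at the price of needing $p,q$ to lie where $\Phi$ is locally Lipschitz and of the uniformity issue you yourself flag: the limit $x^*$, the vector $v$, and the Lipschitz constant all implicitly depend on $\lambda,\mu,D_1,D_2$ as these are varied, a point the paper also passes over silently by treating $x^*$ and $y^*$ as fixed. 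With the boundedness of $v$ justified as you indicate (e.g.\ $v\in\partial g(p)$ with $p$ ranging in a fixed bounded set, or directly from $\|v\|\leq \frac{m}{\lambda}\|x^*-p\|$ with $\lambda$ bounded away from zero), your argument is a sound and arguably cleaner alternative to the paper's.
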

\begin{proof}
 Since $\{x_n\}$ is bounded, so are $\{y_n\}$ and $\{z_n\}$, and for any cluster points $x^*$ and $y^*$ of the sequences $\{x_n\}$ and $\{y_n\}$, 
$$y^*=\frac{D_1(x^*-prox_{\lambda g}^{D_1}(x^*))}{\lambda}=\frac{D_2(x^*-prox_{\mu f}^{D_2}(x^*))}{\mu},$$
 we have 
$$\|prox_{\lambda g}^{D_1}x^*)-prox_{\mu f}^{D_2}(x^*)\|=\|(\lambda D_1^{-1}-\mu D_2^{-1})y^*\|,$$
 which yields  $\|prox_{\lambda g}^{D_1}(x^*)-prox_{\mu f}^{D_2}(x^*)\|\rightarrow 0$ as $\lambda D_1^{-1}$ is close enough to $\mu D_2^{-1}$.
\vskip1mm
In view of the fact  that 
\begin{eqnarray*}
\Phi(prox_{\mu f}^{D_2}(x^*))-\Phi(prox_{\lambda g}^{D_1}(x^*))&=g(prox_{\mu f}^{D_2}(x^*))-f(prox_{\mu f}^{D_2}(x^*))\\
&-g(prox_{\lambda g}^{D_1}(x^*))+f(prox_{\lambda g}^{D_1}(x^*)),
\end{eqnarray*}
and $g$ is lower semi-continuous, we have 
$$g(prox_{\mu f}^{D_2}(x^*))\leq\lim_{\lambda D_1^{-1}\rightarrow\mu D_2^{-1}}\inf g(prox_{\lambda g}^{D_1}(x^*)),$$
similarly,
$$f(prox_{\lambda g}^{D_1}(x^*))\leq\lim_{\mu D_2^{-1}\rightarrow\lambda D_1^{-1}}\inf f(prox_{\mu f}^{D_2}(x^*)),$$
which means $\Phi(prox_{\mu f}^{D_2}(x^*))-\Phi(prox_{\lambda g}^{D_1}(x^*))\leq 0$.
\vskip1mm
On the other hand, $prox_{\lambda g}^{D_1}(x^*)$  and $prox_{\mu f}^{D_2}(x^*)$ are the minimums of $g$ and $f$, respectively, that is,
$$g(prox_{\mu f}^{D_2}(x^*))\geq g(prox_{\lambda g}^{D_1}(x^*)),f(prox_{\lambda g}^{D_1}(x^*))\geq f(prox_{\mu f}^{D_2}(x^*)),$$
so we have 
$\Phi(prox_{\mu f}^{D_2}(x^*))-\Phi(prox_{\lambda g}^{D_1}(x^*))\geq 0$, which proves the thesis.
\end {proof}
{\bf Remark 3} Although $\|prox_{\lambda g}^{D_1}(x^*)-prox_{\mu f}^{D_2}(x^*)\|\rightarrow 0$ when  $\lambda D_1^{-1}$ is close enough to $\mu D_2^{-1}$, it is not true for any other $x$, that is,
$\|prox_{\lambda g}^{D_1}(x)-prox_{\mu f}^{D_2}(x)\|\rightarrow 0$ is invalid. 
\vskip 1mm
\begin{theorem}\label{Th:3.4}
 Suppose that ASSUMPTION (A1)-(A3) hold. Starting from $x_0,x_1\in \mathbb{R}^n$, we consider the iterates $(x_n, y_n, z_n)_{n\in \mathbb{N}}$ generated by Algorithm 1. If the sequence $\{x_n\}$ converges to a cluster point $x^*$, then  $prox_{\mu f}^{D_2}(x^*)$ is an $\epsilon-$approximation point of  $prox_{\lambda g}^{D_1}(x^*)$  and we have 
\begin{eqnarray*}
\Phi_{\lambda,\mu}(prox_{\lambda g}^{D_1}(x^*))-\Phi(prox_{\lambda g}^{D_1}(x^*))\rightarrow 0,
\end{eqnarray*}
when $\lambda D_1^{-1}$ is close enough to $\mu D_2^{-1}$, where $\epsilon$ is  a given arbitrary small positive constant.
\end{theorem}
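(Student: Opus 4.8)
The plan is to reduce the whole statement to two facts that are already at my disposal: the distance estimate $\|prox_{\lambda g}^{D_1}(x^*)-prox_{\mu f}^{D_2}(x^*)\|\to 0$ proved in Theorem \ref{Th:3.3}, and the identity (\ref{3.4}) recorded in Remark 2. Throughout I would abbreviate $p=prox_{\lambda g}^{D_1}(x^*)$ and $q=prox_{\mu f}^{D_2}(x^*)$, and take the cluster point $x^*$ as the base point in the metric functions of Definition \ref{Def:2.3}, since $p$ is by construction the minimizer of $z\mapsto g_{\lambda,D_1}(z,x^*)$. The first thing I would extract from Theorem \ref{Th:3.3} is more than its stated conclusion: its proof combines the lower semicontinuity of $g$ and $f$ with the fact that $p$ and $q$ realize the relevant infima to force both $g(q)-g(p)\to 0$ and $f(q)-f(p)\to 0$ as $\lambda D_1^{-1}\to\mu D_2^{-1}$. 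These two value limits, together with $\|q-p\|\to 0$, are the only inputs the rest of the argument needs.

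For the $\epsilon$-approximation claim I would expand the two metric-function differences at base point $x^*$:
\begin{eqnarray*}
g_{\lambda,D_1}(q,x^*)-g_{\lambda,D_1}(p,x^*) &=& [g(q)-g(p)]+\tfrac{1}{2\lambda}\langle q-p,\,D_1(q+p-2x^*)\rangle,\\
f_{\mu,D_2}(q,x^*)-f_{\mu,D_2}(p,x^*) &=& [f(q)-f(p)]+\tfrac{1}{2\mu}\langle q-p,\,D_2(q+p-2x^*)\rangle.
\end{eqnarray*}
The quadratic remainders are of order $\|q-p\|$ because $p,q$ remain bounded, so they vanish; combined with the value limits from the first step, both differences tend to $0$. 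Hence, for $\lambda D_1^{-1}$ sufficiently close to $\mu D_2^{-1}$, each is dominated by the prescribed $\epsilon$, which is precisely the pair of inequalities in Definition \ref{Def:2.3} certifying that $q$ is an $\epsilon$-approximation of $p$.

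For the envelope-gap claim I would invoke identity (\ref{3.4}) directly, which yields
$$\Phi_{\lambda,\mu}(p)-\Phi(p)=-f_{\mu,D_2}(q,p)=-\Big[f(q)-f(p)+\tfrac{1}{2\mu}\|q-p\|^2_{D_2}\Big].$$
Here the quadratic term is controlled by $\tfrac{m}{2\mu}\|q-p\|^2\to 0$ via (\ref{Eq2.1}), while $f(q)-f(p)\to 0$ was already secured, so the right-hand side tends to $0$ as $\lambda D_1^{-1}$ approaches $\mu D_2^{-1}$, which is the assertion.

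The genuinely delicate point, and hence the main obstacle, is the justification that $g(q)-g(p)\to 0$ and $f(q)-f(p)\to 0$. Lower semicontinuity on its own supplies only one-sided ($\liminf$) inequalities, and both $p$ and $q$ drift as the parameters converge, so the matching reverse inequalities must be drawn from the minimization characterization of the proximal points exactly as in the proof of Theorem \ref{Th:3.3}; I would therefore lean on that theorem rather than re-derive the limits. Once the two value limits are in place, the remaining estimates are routine continuity-of-the-quadratic computations.
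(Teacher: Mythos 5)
Your proposal is correct and follows essentially the same route as the paper: you expand the metric-function differences $g_{\lambda,D_1}(\cdot,x^*)$ and $f_{\mu,D_2}(\cdot,x^*)$ at the two proximal points, drive them to zero using the distance estimate and the value limits inherited from Theorem \ref{Th:3.3}, and then settle the envelope gap via identity (\ref{3.4}) exactly as the paper does. The only cosmetic difference is that you bound the quadratic remainder by the generic identity $\|a\|_{D_1}^2-\|b\|_{D_1}^2=\langle a-b,D_1(a+b)\rangle$ instead of substituting $y^*$ as the paper does, which changes nothing of substance.
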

\begin{proof}

In deed, it turns out that
\begin{eqnarray*}
&&g_{\lambda,D_1}(prox_{\lambda g}^{D_1}x^*,x^*)-g_{\lambda,D_1}(prox_{\mu f}^{D_2}x^*,x^*)\\
&=&g(prox_{\lambda g}^{D_1}x^*)-g(x^*)+\frac{\|prox_{\lambda g}^{D_1}x^*-x^*\|^2_{D_1}}{2\lambda}\\
&&-\{g(prox_{\mu f}^{D_2}x^*)-g(x^*)+\frac{\|prox_{\mu f}^{D_2}x^*-x^*\|^2_{D_1}}{2\lambda}\}\\
&=&g(prox_{\lambda g}^{D_1}x^*)-g(prox_{\mu f}^{D_2}x^*)+\frac{1}{2\lambda}[\langle \lambda y^*,\lambda D_1^{-1}y^*\rangle -\langle D_1 D_2^{-1}\mu y^*, D_2^{-1}(\mu y^*)\rangle].
\end{eqnarray*}
Noticing that $\|prox_{\lambda g}^{D_1}(x^*)-prox_{\mu f}^{D_2}(x^*)\|=\|(\lambda D_1^{-1}-\mu D_2^{-1})y^*\|\rightarrow 0$ as $\lambda D_1^{-1}$ is closed enough to $\mu D_2^{-1}$ , we have 
$$|g_{\lambda,D_1}(prox_{\lambda g}^{D_1}x^*,x^*)-g_{\lambda,D_1}(prox_{\mu f}^{D_2}x^*,x^*)|\rightarrow 0.$$
Be in line with the proof in Theorem\ref{Th:3.3}, we have 
$$|f_{\lambda,D_2}(prox_{\lambda g}^{D_1}x^*,x^*)-f_{\lambda,D_2}(prox_{\mu f}^{D_2}x^*,x^*)|\rightarrow 0,$$
which implies that $prox_{\mu f}^{D_2}(x^*)$ is an $\epsilon-$approximation point of  $prox_{\lambda g}^{D_1}(x^*)$. Moreover, it turns out from (\ref{3.4}) that
 \begin{eqnarray*}
&&\Phi_{\lambda,\mu}(prox_{\lambda g}^{D_1}(x^*))-\Phi(prox_{\lambda g}^{D_1}(x^*))=-f_{\mu,D_2}(prox_{\mu f}^{D_2}x^*,prox_{\lambda g}^{D_1}x^*)\\
&=&f(prox_{\lambda g}^{D_1}x^*)-f(prox_{\mu f}^{D_2}x^*)-\frac{1}{2\mu}\|prox_{\mu f}^{D_2}x^*-prox_{\lambda g}^{D_1}x^*\|^2_{D_2},
\end{eqnarray*}
which proves the results.
\end{proof}
 
When $\theta_n=0$, the inertial gradient method reduces to the gradient descent method, which is listed in the following Algorithm 2.

\begin{algorithm}
\label{Alg:3.2}$\left. {}\right. $

{\bf  Algorithm 2  \hspace{0.5cm}Gradient Descent Method}
\vskip2mm
{\rm \textbf{Initialization:}  Give $D_1,D_2\in M_m$, choose  $\lambda>0,\mu>0$ and $\lambda \geq m^2\mu$, $0<\gamma<2$,  select arbitrary starting point $x_0\in \mathbb{R}^n$.
  \vskip 1mm
\textbf{Iterative Step:} Given the iterates $x_n$  for each $n\geq 1$,
 compute
\begin{eqnarray}\label{Eq:3.1}
\begin{cases}
 y_n=\nabla g_{\lambda,D_1}(x_n) =\frac{D_1(x_n-prox_{\lambda g}^{D_1}(x_n))}{\lambda},\\
z_n=\nabla f_{\mu,D_2}(x_n)=\frac{D_2(x_n-prox_{\mu f}^{D_2}(x_n))}{\mu}\\
x_{n+1}=x_n-\frac{\gamma}{\eta}(y_n-z_n),
\end{cases}
\end{eqnarray}
\textbf{Stopping Criterion:} If  $y_n=z_n$ then stop. Otherwise, set $n:=n+1$ and return to Iterative Step.}
\end{algorithm}

\begin{theorem}\label{Th:3.2}
 Suppose that ASSUMPTION (A1)-(A3)  hold. Starting from $x_0\in \mathbb{R}^n$, we consider the iterates $(x_n, y_n, z_n)_{n\in \mathbb{N}}$ generated by  Algorithm 2. Then, for every $n$, $\Phi_{\lambda,\mu}(x_n)$ is nonincreasing, specifically,
we have
\begin{eqnarray}\label{Eq:3.1}
\Phi_{\lambda,\mu}(x_{n+1})\leq\Phi_{\lambda,\mu}(x_n)+\eta(\frac{1}{2}-\frac{1}{\gamma})\|z_n-y_n\|^2,
\end{eqnarray}
where $\eta$ is defined as in Lemma \ref{lem:3.1} (ii). Moreover, the sequence $\{x_n\}$ is bounded.
\vskip 1mm
\end{theorem}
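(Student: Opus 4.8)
The plan is to read Algorithm 2 as the special case $\theta_n=0$ of Algorithm 1. With $\theta_n=0$ the extrapolation step disappears, so $w_n=x_n$, and Lemma \ref{lem:3.2}(i) gives $y_n-z_n=\nabla\Phi_{\lambda,\mu}(x_n)$. The iterative step therefore becomes the pure gradient step $x_{n+1}=x_n-\frac{\gamma}{\eta}\nabla\Phi_{\lambda,\mu}(x_n)$, equivalently $x_{n+1}-x_n=-\frac{\gamma}{\eta}(y_n-z_n)$, which reduces the first assertion to a single application of the smoothness of $\Phi_{\lambda,\mu}$.

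First I would invoke Lemma \ref{lem:3.2}(iii), which guarantees that $\nabla\Phi_{\lambda,\mu}$ is $\eta$-Lipschitz continuous, and feed this into the Descent Lemma (Lemma \ref{Lem:2.5}):
\begin{eqnarray*}
\Phi_{\lambda,\mu}(x_{n+1})\leq\Phi_{\lambda,\mu}(x_n)+\langle\nabla\Phi_{\lambda,\mu}(x_n),x_{n+1}-x_n\rangle+\frac{\eta}{2}\|x_{n+1}-x_n\|^2.
\end{eqnarray*}
Substituting $\nabla\Phi_{\lambda,\mu}(x_n)=-\frac{\eta}{\gamma}(x_{n+1}-x_n)$ turns the cross term into $-\frac{\eta}{\gamma}\|x_{n+1}-x_n\|^2$, so the right-hand side collapses to $\Phi_{\lambda,\mu}(x_n)+\eta(\frac{1}{2}-\frac{1}{\gamma})\|x_{n+1}-x_n\|^2$, which is the asserted descent inequality once one records $\|x_{n+1}-x_n\|=\frac{\gamma}{\eta}\|y_n-z_n\|$. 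Since the prefactor $\eta(\frac{1}{2}-\frac{1}{\gamma})$ is strictly negative precisely on the admissible range $0<\gamma<2$, the estimate forces $\Phi_{\lambda,\mu}(x_{n+1})\leq\Phi_{\lambda,\mu}(x_n)$, yielding the monotonicity claim with no extra work.

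For boundedness I would follow the line of Case I in the proof of Theorem \ref{Th:3.6}. The monotone sequence $\{\Phi_{\lambda,\mu}(x_n)\}$ is bounded below, because the chain (\ref{Eq:3.3}) established inside Lemma \ref{lem:3.2}(ii) gives $\inf\Phi_{\lambda,\mu}\geq\inf\Phi$ and (A2) bounds $\Phi$ below by $\beta$; hence $\{\Phi_{\lambda,\mu}(x_n)\}$ converges to some $\Phi^*$. It then remains to convert the uniform bound $\Phi_{\lambda,\mu}(x_n)\leq\Phi_{\lambda,\mu}(x_0)$ into a uniform bound on $\|x_n\|$ through the coercive minorant supplied by (A2), exactly as in Theorem \ref{Th:3.6}.

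The routine part is the descent estimate; the delicate point is this last passage from the level bound on $\Phi_{\lambda,\mu}(x_n)$ to boundedness of $\{x_n\}$. The obstacle is that the pointwise lower estimate of Lemma \ref{lem:3.2}(ii) carries the constant $\frac{1}{2m\lambda}-\frac{m}{2\mu}$, which under the standing hypothesis $\lambda\geq m^2\mu$ is nonpositive, so it does not by itself furnish a coercive minorant for $\Phi_{\lambda,\mu}$. Closing this gap, namely producing a genuine coercive lower bound of the form $\Phi_{\lambda,\mu}(x)\geq\phi(\|x\|)+\beta$ (or otherwise exploiting the coercivity of $\phi$ together with the ordering $\inf\Phi_{\lambda,\mu}\geq\inf\Phi$), is the step that requires the most care, and is where I would concentrate the argument.
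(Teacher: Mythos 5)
Your first two paragraphs reproduce exactly the paper's own argument for (\ref{Eq:3.1}): identify $y_n-z_n=\nabla\Phi_{\lambda,\mu}(x_n)$, invoke the $\eta$-Lipschitz continuity of $\nabla\Phi_{\lambda,\mu}$ from Lemma \ref{lem:3.2}(iii) together with the Descent Lemma \ref{Lem:2.5}, and substitute $x_{n+1}-x_n=-\frac{\gamma}{\eta}\nabla\Phi_{\lambda,\mu}(x_n)$ to obtain $\Phi_{\lambda,\mu}(x_{n+1})\leq\Phi_{\lambda,\mu}(x_n)+\eta(\frac{1}{2}-\frac{1}{\gamma})\|x_{n+1}-x_n\|^2$ and hence monotonicity for $0<\gamma<2$. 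This part is correct and is the same route as the paper (note only that converting $\|x_{n+1}-x_n\|^2$ into $\|y_n-z_n\|^2$ introduces the factor $\gamma^2/\eta^2$ rather than giving the constant $\eta(\frac{1}{2}-\frac{1}{\gamma})$ literally; this cosmetic mismatch is present in the paper's own statement as well and does not affect the sign, hence not the monotonicity).

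The genuine gap is in the boundedness claim, which is part of the theorem but which your proposal does not prove. You correctly diagnose the difficulty --- the pointwise lower bound in Lemma \ref{lem:3.2}(ii) carries the coefficient $\frac{1}{2m\lambda}-\frac{m}{2\mu}\leq 0$ under $\lambda\geq m^2\mu$, so it does not by itself produce a coercive minorant of $\Phi_{\lambda,\mu}$ --- but you then defer the step (``where I would concentrate the argument'') instead of supplying it, so the proposal ends with the key assertion of boundedness unestablished. The paper closes this step by combining (A2) with the comparison (\ref{Eq:3.3}): it writes the chain $\phi(\|x_n\|)+\beta\leq\Phi(x_n)\leq\Phi_{\lambda,\mu}(x_n)$, uses the convergence of the monotone sequence $\Phi_{\lambda,\mu}(x_n)$ to a limit $\Phi^*$ to bound $\phi(\|x_n\|)$, and concludes boundedness of $\{x_n\}$ from the coercivity of $\phi$. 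Whatever one thinks of the pointwise inequality $\Phi(x_n)\leq\Phi_{\lambda,\mu}(x_n)$ that this chain uses (the paper's Lemma \ref{lem:3.2}(ii) only compares infima, so this comparison is itself asserted rather than derived), that is the mechanism the paper relies on; your proposal, as written, proves the descent inequality and monotonicity but not the boundedness statement, so it is incomplete relative to the theorem.
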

\begin{proof}
It follows from the descent  Lemma \ref{Lem:2.5} because of the Lipshitz property of continuous gradient of $\Phi_{\lambda,\mu}$ that
\begin{eqnarray*}
\Phi_{\lambda,\mu}(x_{n+1})&\leq&\Phi_{\lambda,\mu}(x_{n})+\langle x_{n+1}-x_n,\nabla \Phi_{\lambda,\mu}(x_{n})\rangle+\frac{\eta}{2}\|x_{n+1}-x_n\|^2. 
\end{eqnarray*}
From $x_{n+1}=x_n-\frac{\gamma}{\eta}(y_n-z_n)=x_n-\frac{\gamma}{\eta}\nabla \Phi_{\lambda,\mu}(x_{n})$, we have 
\begin{eqnarray*}
\Phi_{\lambda,\mu}(x_{n+1})&\leq&\Phi_{\lambda,\mu}(x_{n})+\langle x_{n+1}-x_n, -\frac{\eta}{\gamma}(x_{n+1}-x_n)\rangle+\frac{\eta}{2}\|x_{n+1}-x_n\|^2\\
&=&\Phi_{\lambda,\mu}(x_{n})+(\frac{\eta}{2}-\frac{\eta}{\gamma})\|x_{n+1}-x_n\|^2,
\end{eqnarray*}
which  means that the sequence $\Phi_{\lambda,\mu}(x_n)$ is a monotonically decreasing sequence and is a sufficient decrease condition of the approximation function values.  From Lemma \ref{lem:3.2}  and the assumption on the existence for the lower bound of $\Phi$, $\Phi_{\lambda,\mu}(x_n)$ converges to some limit $\Phi^*$, which in turn  guarentees that 
\begin{eqnarray*}
(\frac{\eta}{\gamma}-\frac{\eta}{2})\sum_{n=0}^{\infty}\|x_{n+1}-x_n\|^2\leq\sum_{n=0}^{\infty}(\Phi_{\lambda,\mu}(x_{n})-\Phi_{\lambda,\mu}(x_{n+1}))<+\infty,
\end{eqnarray*}
and then $\sum_{n=0}^{\infty}\|y_n-z_n\|^2<\infty$. 
\vskip1mm
Indeed, summing the above inequality over $n= 0,\cdots,N-1 $ for some positive integer $N-1$,
\begin{eqnarray*}
\sum_{n=0}^{N-1}\|y_n-z_n\|^2\leq\frac{2\eta}{\gamma(2-\gamma)}(\Phi_{\lambda,\mu}(x_0)-\Phi^*),
\end{eqnarray*}
Let $\bar{n}= argmin_{n=0,1,\cdots,N-1}\|y_n-z_n\|^2$, then one can check that 
\begin{eqnarray*}
N\|y_{\bar{n}}-z_{\bar{n}}\|^2\leq\frac{2\eta}{\gamma(2-\gamma)}(\Phi_{\lambda,\mu}(x_0)-\Phi^*),
\end{eqnarray*}
that is  $\|y_{{n}}-z_{{n}}\|=o(\frac{1}{\sqrt{n}})$.
\vskip 1mm
It follows from ASSUMPTION (A2) that $\inf \Phi(x)\geq \inf\phi(\|x\|)+\beta$,  combining with (\ref{Eq:3.3}), we have 
\begin{eqnarray*}
\inf\phi(\|x\|)+\beta\leq\inf \Phi(x)\leq \inf\Phi_{\lambda,\mu}(x_{n}).
\end{eqnarray*}
Taking the limit on $n\rightarrow\infty$, we have 
\begin{eqnarray*}
\lim_{n\rightarrow\infty}\inf\phi(\|x_n\|)+\beta\leq\lim_{n\rightarrow\infty}\inf \Phi(x_n)\leq \lim_{n\rightarrow\infty}\inf\Phi_{\lambda,\mu}(x_{n})=\Phi^*,
\end{eqnarray*}
 which entails that $\{x_n\}$ is bounded from the coercivity of $\phi$.
\end{proof}

\begin{theorem}\label{Th:3.7}
 Suppose that ASSUMPTION  (A1)-(A3) hold. Starting from $x_0\in \mathbb{R}^n$, we consider the iterates $(x_n, y_n, z_n)_{n\in \mathbb{N}}$ generated by Algorithm 2. If the sequence $\{x_n\}$ converges to a cluster point $x^*$, then 
$$\Phi(prox_{\mu f}^{D_2}(x^*))-\Phi(prox_{\lambda g}^{D_1}(x^*))\rightarrow 0,$$
as $\lambda D_1^{-1}$ is close enough to $\mu D_2^{-1}$.
\end{theorem}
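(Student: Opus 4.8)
The plan is to reproduce the argument of Theorem~\ref{Th:3.3}, since the conclusion is word-for-word the same and only the generating scheme differs. The inputs furnished by Algorithm~2 are exactly those established in Theorem~\ref{Th:3.2}: the value sequence $\Phi_{\lambda,\mu}(x_n)$ is nonincreasing and convergent, the iterates $\{x_n\}$ are bounded, and $\sum_{n}\|y_n-z_n\|^2<\infty$, whence $y_n-z_n\to 0$. Since $\{x_n\}$ is bounded and the proximal maps are Lipschitz by Lemma~\ref{lem:3.1}, the sequences $\{y_n\}$ and $\{z_n\}$ are bounded as well, so every object needed for the limiting analysis is available.

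First I would pass to the cluster point. Along a subsequence with $x_n\to x^*$, the continuity of $\nabla g_{\lambda,D_1}$ and $\nabla f_{\mu,D_2}$---a consequence of the Lipschitz continuity of $prox_{\lambda g}^{D_1}$ and $prox_{\mu f}^{D_2}$---yields $y_n\to y^*$ and $z_n\to z^*$ with
$$y^* = \frac{D_1(x^*-prox_{\lambda g}^{D_1}(x^*))}{\lambda}, \qquad z^* = \frac{D_2(x^*-prox_{\mu f}^{D_2}(x^*))}{\mu}.$$
Because $y_n-z_n\to 0$ along the whole sequence, the two limits must coincide, $y^*=z^*$, which is precisely the fixed-point relation used in Theorem~\ref{Th:3.3}. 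This is the only point at which the gradient-descent iteration is used; compared with Algorithm~1, where the same relation had to be extracted through the inertial surrogate $w_n$, here the step is more direct.

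Once $y^*=z^*$ is secured, the remainder is identical to Theorem~\ref{Th:3.3}. Solving the relation for the two proximal points gives $prox_{\lambda g}^{D_1}(x^*)=x^*-\lambda D_1^{-1}y^*$ and $prox_{\mu f}^{D_2}(x^*)=x^*-\mu D_2^{-1}y^*$, hence
$$\|prox_{\lambda g}^{D_1}(x^*)-prox_{\mu f}^{D_2}(x^*)\| = \|(\lambda D_1^{-1}-\mu D_2^{-1})y^*\| \to 0$$
as $\lambda D_1^{-1}$ approaches $\mu D_2^{-1}$. To convert this into convergence of the objective values I would invoke the two structural facts already used for Theorem~\ref{Th:3.3}: lower semicontinuity of $g$ and $f$ gives $\Phi(prox_{\mu f}^{D_2}(x^*))-\Phi(prox_{\lambda g}^{D_1}(x^*))\leq 0$, while the minimality identities $g(prox_{\lambda g}^{D_1}(x^*))=\inf g$ and $f(prox_{\mu f}^{D_2}(x^*))=\inf f$ from Remark~1 force the reverse inequality $\geq 0$. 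Squeezing the two bounds yields the claimed limit.

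The main obstacle is not a hard estimate but the care required in the passage to the limit: one must extract a single subsequence on which $x_n\to x^*$ and then apply continuity of both gradient maps simultaneously, so that $y^*=z^*$ follows from $y_n-z_n\to 0$ rather than from two unrelated subsequential extractions. After this point no new inequality is needed, and the proof closes exactly as in Theorem~\ref{Th:3.3}.
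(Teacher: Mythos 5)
Your proposal is correct and follows essentially the route the paper intends: Theorem~\ref{Th:3.7} is stated without proof precisely because it repeats the argument of Theorem~\ref{Th:3.3}, using the relation $y^*=z^*$ at the cluster point, the bound $\|prox_{\lambda g}^{D_1}(x^*)-prox_{\mu f}^{D_2}(x^*)\|=\|(\lambda D_1^{-1}-\mu D_2^{-1})y^*\|$, lower semicontinuity for the inequality $\leq 0$, and the minimality identities of Remark~1 for $\geq 0$. Your only addition is to justify $y^*=z^*$ explicitly from the summability $\sum_n\|y_n-z_n\|^2<\infty$ of Theorem~\ref{Th:3.2} and the Lipschitz continuity of the proximal maps, which is exactly the step the paper leaves implicit.
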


\begin{theorem}\label{Th:3.8}
 Suppose that ASSUMPTION  (A1)-(A3) hold. Starting from $x_0\in \mathbb{R}^n$, we consider the iterates $(x_n, y_n, z_n)_{n\in \mathbb{N}}$ generated by Algorithm 2. If the sequence $\{x_n\}$ converges to a cluster point $x^*$, then  $prox_{\mu f}^{D_2}(x^*)$ is an $\epsilon-$approximation point of  $prox_{\lambda g}^{D_1}(x^*)$  and we have 
\begin{eqnarray*}
\Phi_{\lambda,\mu}(prox_{\lambda g}^{D_1}(x^*))-\Phi(prox_{\lambda g}^{D_1}(x^*))\rightarrow 0,
\end{eqnarray*}
when $\lambda D_1^{-1}$ is close enough to $\mu D_2^{-1}$.
\end{theorem}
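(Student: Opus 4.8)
The plan is to mirror the argument of Theorem \ref{Th:3.4}, since Theorem \ref{Th:3.8} is exactly its gradient-descent counterpart: all the quantitative estimates there depend on the cluster point $x^*$ only through a single fixed-point relation, and not on whether $x^*$ was produced by the inertial scheme or by the plain gradient scheme. Hence the only thing that must be verified afresh is that the limit produced by Algorithm 2 satisfies that same relation; once this is in hand, the remaining computation is word-for-word that of Theorem \ref{Th:3.4}.

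First I would establish the key identity at the cluster point. From Theorem \ref{Th:3.2} we have $\sum_{n=0}^\infty\|y_n-z_n\|^2<\infty$, hence $y_n-z_n\to 0$. Since $y_n=\frac{D_1(x_n-prox_{\lambda g}^{D_1}(x_n))}{\lambda}$ and $z_n=\frac{D_2(x_n-prox_{\mu f}^{D_2}(x_n))}{\mu}$, the Lipschitz continuity of the proximal operators from Lemma \ref{lem:3.1}, together with $x_n\to x^*$, forces $y_n\to y^*$ and $z_n\to z^*$ with
\begin{eqnarray*}
y^*=\frac{D_1(x^*-prox_{\lambda g}^{D_1}(x^*))}{\lambda}=\frac{D_2(x^*-prox_{\mu f}^{D_2}(x^*))}{\mu}=z^*.
\end{eqnarray*}
This is precisely the relation invoked in Theorem \ref{Th:3.3}, and it is the crux of the whole proof.

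Next I would exploit this identity to control the gap between the two proximal points, obtaining
\begin{eqnarray*}
\|prox_{\lambda g}^{D_1}(x^*)-prox_{\mu f}^{D_2}(x^*)\|=\|(\lambda D_1^{-1}-\mu D_2^{-1})y^*\|,
\end{eqnarray*}
so the right-hand side tends to $0$ as $\lambda D_1^{-1}$ approaches $\mu D_2^{-1}$. Substituting this bound into the explicit expansions of $g_{\lambda,D_1}(prox_{\lambda g}^{D_1}x^*,x^*)-g_{\lambda,D_1}(prox_{\mu f}^{D_2}x^*,x^*)$ and of the analogous $f$-difference, exactly as carried out in Theorem \ref{Th:3.4}, shows that both vanish in the limit; by Definition \ref{Def:2.3} this is the assertion that $prox_{\mu f}^{D_2}(x^*)$ is an $\epsilon$-approximation of $prox_{\lambda g}^{D_1}(x^*)$ for the prescribed small $\epsilon$. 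Finally I would invoke identity (\ref{3.4}), namely $\Phi_{\lambda,\mu}(prox_{\lambda g}^{D_1}x^*)=\Phi(prox_{\lambda g}^{D_1}x^*)-f_{\mu,D_2}(prox_{\mu f}^{D_2}x^*,prox_{\lambda g}^{D_1}x^*)$, and expand the residual term as
\begin{eqnarray*}
f(prox_{\lambda g}^{D_1}x^*)-f(prox_{\mu f}^{D_2}x^*)-\frac{1}{2\mu}\|prox_{\mu f}^{D_2}x^*-prox_{\lambda g}^{D_1}x^*\|^2_{D_2},
\end{eqnarray*}
which also tends to $0$, yielding the claimed convergence $\Phi_{\lambda,\mu}(prox_{\lambda g}^{D_1}(x^*))-\Phi(prox_{\lambda g}^{D_1}(x^*))\to 0$.

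The main obstacle is the first step: justifying the passage to the limit that combines $y_n-z_n\to 0$ with the continuity of the proximal operators so as to secure $y^*=z^*$. The lower semicontinuity of $f$ (used to handle the term $f(prox_{\lambda g}^{D_1}x^*)-f(prox_{\mu f}^{D_2}x^*)$ as in Theorem \ref{Th:3.3}) also enters here. Everything after the fixed-point relation is a direct transcription of the estimates in Theorem \ref{Th:3.4}, because those estimates never use the inertial term and hold verbatim for the trajectory of Algorithm 2.
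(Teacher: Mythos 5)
Your proposal is correct and takes essentially the same route the paper intends: Theorem \ref{Th:3.8} is stated without proof precisely because it repeats the argument of Theorem \ref{Th:3.4} (via the relation from Theorem \ref{Th:3.3} and identity (\ref{3.4})), and you reproduce that argument while additionally supplying the verification, left implicit in the paper, that the cluster point of Algorithm 2 satisfies $y^*=z^*$ using the summability $\sum_{n}\|y_n-z_n\|^2<\infty$ from Theorem \ref{Th:3.2} and the Lipschitz continuity of the proximal operators from Lemma \ref{lem:3.1}. No gaps beyond those already present in the paper's own treatment.
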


{\bf Remark 4} (1) According to the definition of the approximation stationary point in Moameni\cite{M2020}, Moudafi\cite{M2023}, Sun and Sun\cite{SS2021}, $D_1prox_{\lambda g}^{D_1}(x^*), D_2prox_{\mu f}^{D_2}(x^*)$ are the approximation stationary points of $\Phi$ as $\lambda D_1^{-1}$ is close enough to $\mu D_2^{-1}$. 
\vskip2mm
(2) During the inertial  gradient descent method (Algorithm 1) and the   gradient descent method (Algorithm 2), the subdifferentials of $g$ and $f$ are not evaluate at the same point, so the $\epsilon$-approximation is  introduced naturally.

\section{Numerical Example}
\begin{example} 
We consider the DC problem $\Phi(x)=|x|^3-|x|,x\in \mathbb{R}$, namely,
\begin{eqnarray*}
\Phi(x)=
\begin{cases}
x^3-x, \hspace{0.2cm}x\geq 0;\\
-x^3+x, \hspace{0.2cm}x< 0,
\end{cases}
\end{eqnarray*}
 we have  $\partial_C\Phi(0)=[-1,1] $. 
\vskip1mm
Let $\Phi'(x)=0$, we have $x=\pm\frac{1}{\sqrt{3}}$, hence 
$$\inf_{x\in \mathbb{R}}\Phi(x)=(\frac{1}{3})^{3/2}-(\frac{1}{3})^{1/2}=-\frac{2}{3}(\frac{1}{3})^{1/2}\approx -0.384900179.$$ 
\vskip1mm
Moreover,  we can check  that there exists a coercive function  $\frac{1}{2}|x|^3+\beta$ such that $\Phi(x)\geq \frac{1}{2}|x|^3+\beta$ (see Figure 1.) and then $\Phi(x)\rightarrow+\infty$ when $|x|\rightarrow+\infty$.
\begin{figure}[H]
\centering
\includegraphics[width=0.6\textwidth]{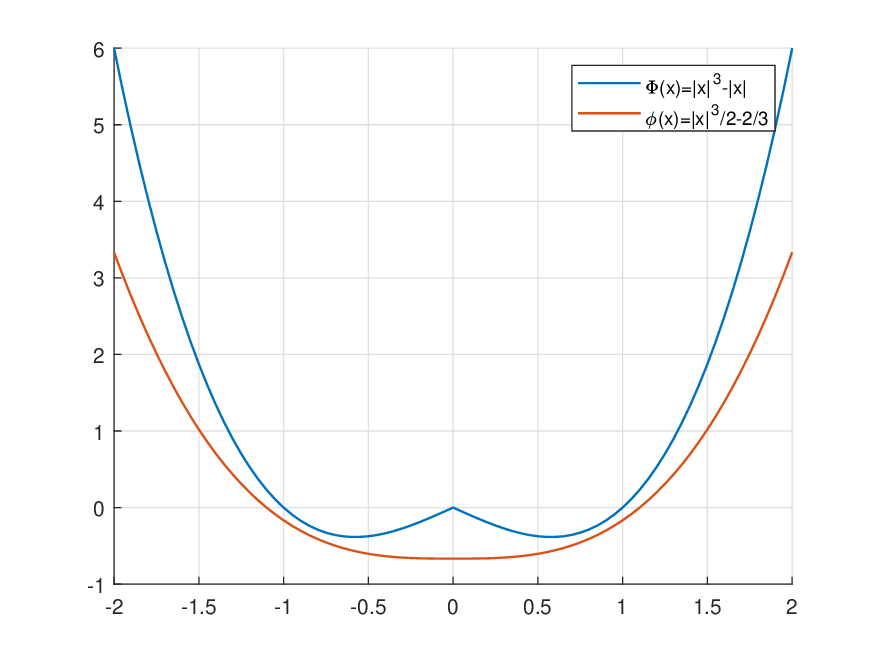}
\caption{\small{$\Phi(x)=|x|^3-|x|,\phi(x)=\frac{|x|^3}{2}- \frac{2}{3}.$ }}
\label{Fig:1}
\end{figure}
Now regularizing the function $\Phi$ with the  matrices $D_1,D_2$ 
\begin{eqnarray*}
\Phi_{\lambda,\mu}(x)&=&\inf_{w\in\mathbb{R}}\{|w|^3+\frac{1}{2\lambda}|w-x|_{D_1}^2\}-\inf_{w\in\mathbb{R}}\{|w|+\frac{1}{2\mu}|w-x|_{D_2}^2\}\\
&=&g_{\lambda,D_1}(x)-f_{\mu, D_2}(x),
\end{eqnarray*}
Since $X=\mathbb{R}$, we take $D_1=d_1\geq 1$, $D_2=d_2\geq 1$ and hence 
\begin{eqnarray*} 
g_{\lambda,D_1}(x)=
\begin{cases}
(\frac{-d_1+\sqrt{d_1^2+12\lambda d_1 x}}{6\lambda})^3+\frac{d_1(\frac{-d_1+\sqrt{d^2_1+12\lambda d_1 x}}{6\lambda}-x)^2}{2\lambda},  x\geq 0\\
(\frac{-d_1+\sqrt{d^2_1-12\lambda d_1 x}}{6\lambda})^3+\frac{d_1(\frac{-d_1+\sqrt{d^2_1-12\lambda d_1 x}}{6\lambda}-x)^2}{2\lambda},  x< 0,
\end{cases}
\end{eqnarray*}
and 
\begin{eqnarray*}
f_{\mu,D_2}(x)=
\begin{cases}
x-\frac{\mu}{2d_2}, x\geq 0,\\
-x-\frac{\mu}{2d_2},x <0.
\end{cases}
\end{eqnarray*}
that is,
\begin{eqnarray*}
\Phi_{\lambda,\mu}(x)=
\begin{cases}
(\frac{-d_1+\sqrt{d_1^2+12\lambda d_1 x}}{6\lambda})^3+\frac{d_1(\frac{-d_1+\sqrt{d^2_1+12\lambda d_1 x}}{6\lambda}-x)^2}{2\lambda}-x+\frac{\mu}{2d_2},  x\geq 0\\
(\frac{-d_1+\sqrt{d^2_1-12\lambda d_1 x}}{6\lambda})^3+\frac{d_1(\frac{-d_1+\sqrt{d^2_1-12\lambda d_1 x}}{6\lambda}-x)^2}{2\lambda}+x+\frac{\mu}{2d_2},  x< 0.
\end{cases}
\end{eqnarray*}
And then we have
\begin{eqnarray*}
\nabla\Phi_{\lambda,\mu}(x)=
\begin{cases}
(\frac{\sqrt{d_1^2+12\lambda d_1 x}-d_1}{6\lambda})^2\frac{3d_1}{\sqrt{d_1^2+12\lambda d_1 x}}+\frac{d_1(\frac{\sqrt{d_1^2+12\lambda d_1 x}-d_1}{6\lambda}-x)(\frac{d_1}{\sqrt{d_1^2+12\lambda d_1x}}-1)}{\lambda}-1, x\geq 0,\\
1-(\frac{\sqrt{d_1^2-12\lambda d_1x}-d_1}{-6\lambda})^2\frac{3d_1}{\sqrt{d_1^2-12\lambda d_1 x}}+\frac{d_1(\frac{\sqrt{d_1^2-12\lambda d_1 x}-d_1}{-6\lambda}-x)(\frac{d_1}{\sqrt{d_1^2-12\lambda d_1 x}}-1)}{\lambda},x <0.
\end{cases}
\end{eqnarray*}

The numerical performance(the stop criterion is $\|x_{n+1}-x_n\|\leq 10^{-4}$)  for the cases of $\lambda\neq\mu$ and  $\lambda=\mu$  are considered and  listed in the Figures 2-5 ($d_1=d2=1$) and Tabels 1-2.
\begin{figure}[H]
\centering
\includegraphics[width=0.23\textwidth]{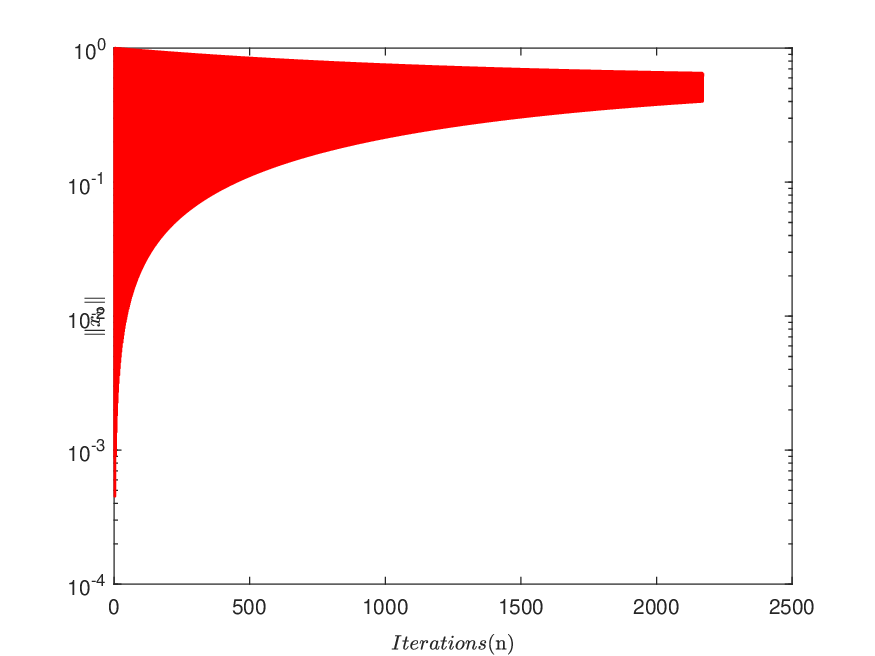}
\includegraphics[width=0.23\textwidth]{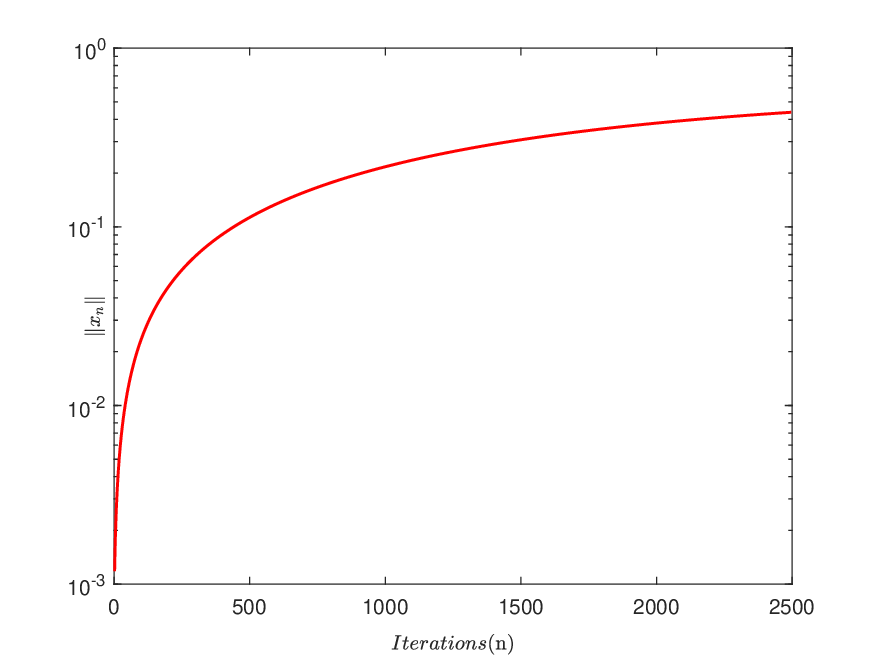}
\includegraphics[width=0.23\textwidth]{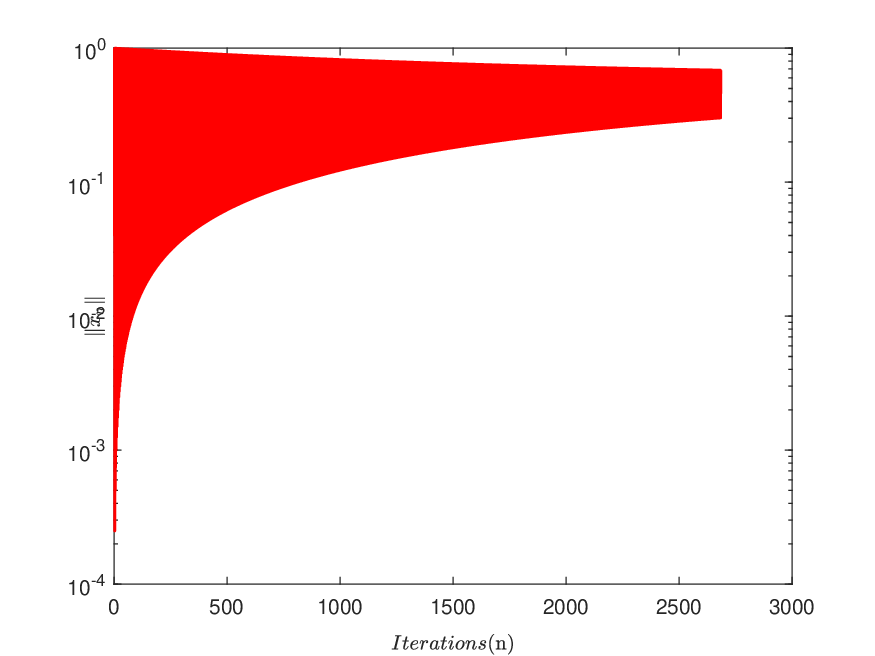}
\includegraphics[width=0.23\textwidth]{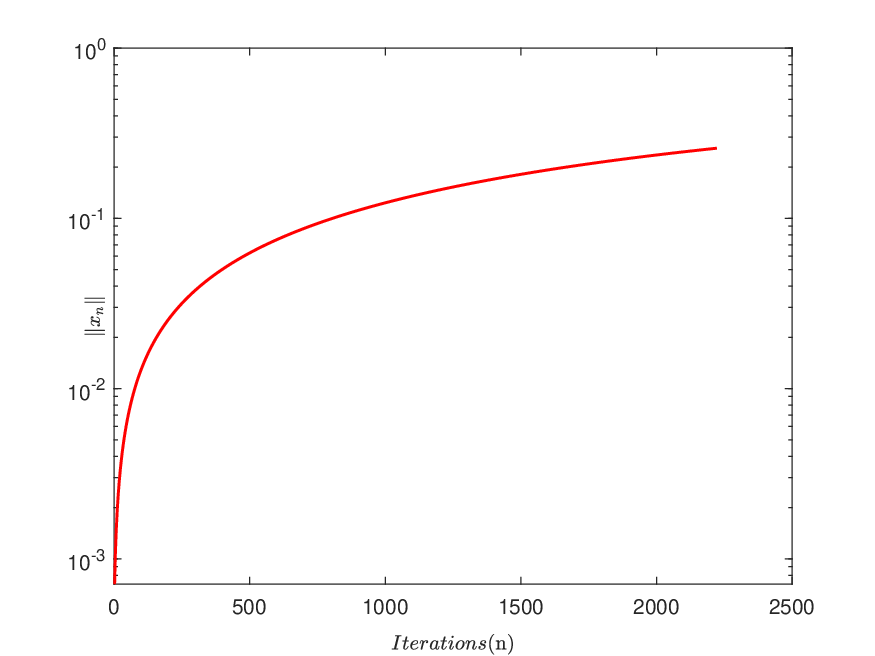}
\caption{\small{Left two:$\lambda=0.01,\mu=0.001$, Right two:$\lambda=\mu=0.001$  for Algo.1 and Algo. 2 }}
\label{Fig:2}
\end{figure}

\begin{figure}[H]
\centering
\includegraphics[width=0.23\textwidth]{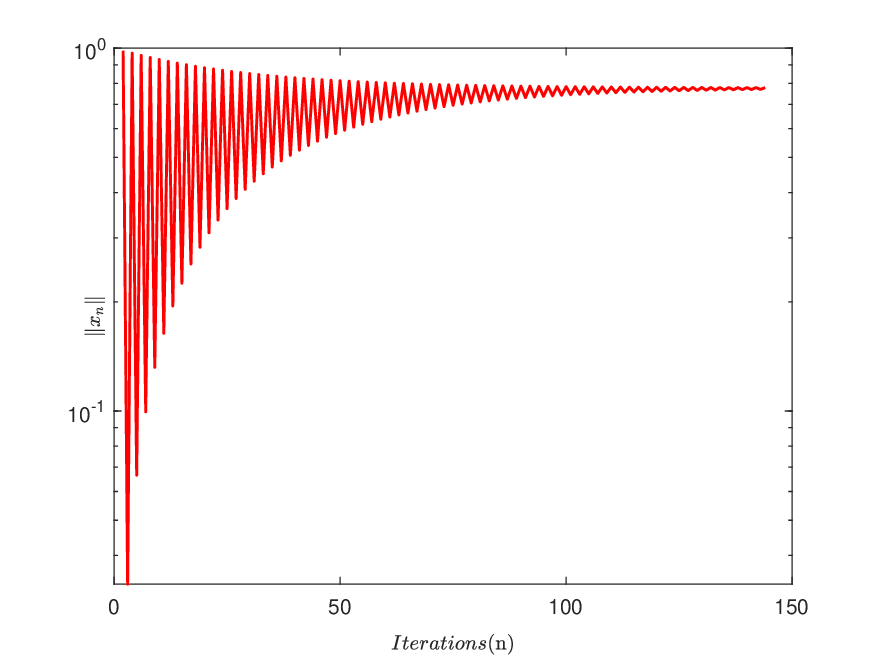}
\includegraphics[width=0.23\textwidth]{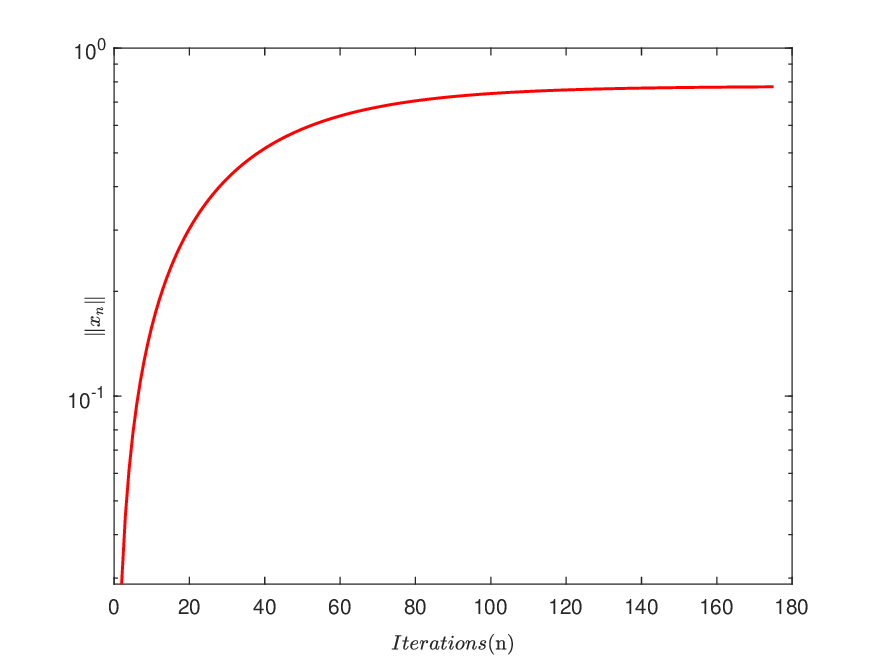}
\includegraphics[width=0.23\textwidth]{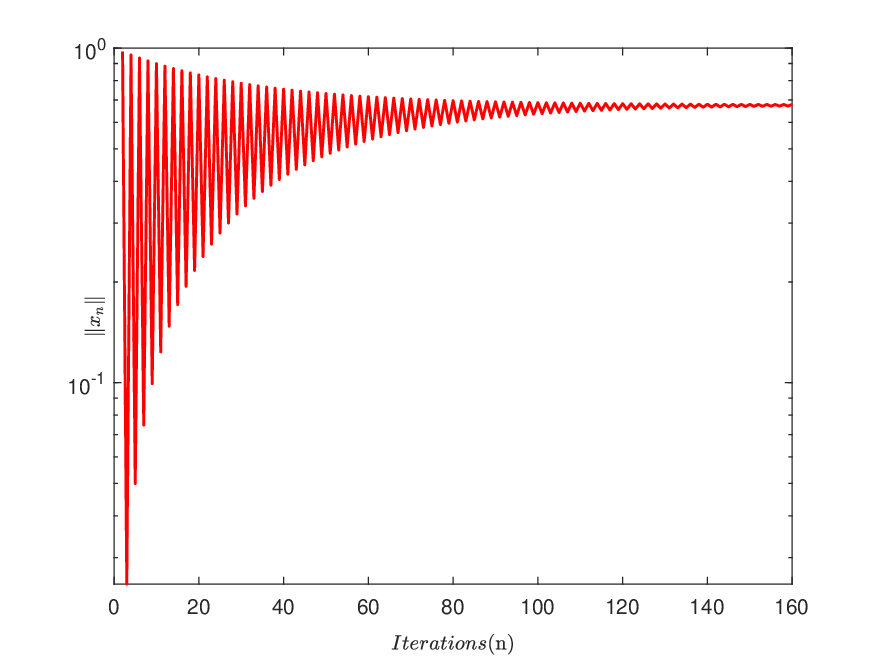}
\includegraphics[width=0.23\textwidth]{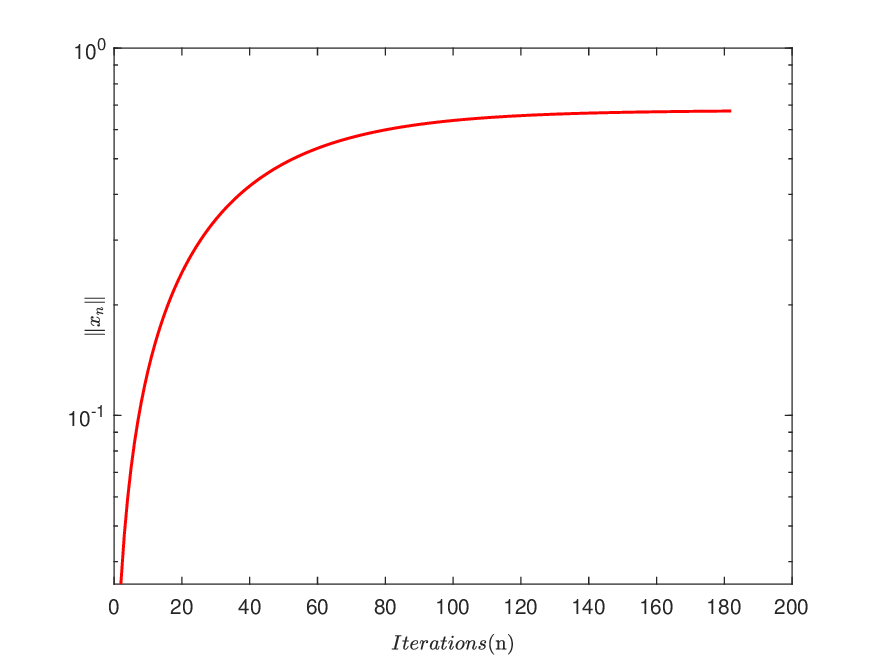}
  \caption{\small{Left two:$\lambda=0.2,\mu=0.1$, Right two:$\lambda=\mu=0.1$ for Algo.1 and Algo. 2 }}\label{Fig:3}
\end{figure}
\begin{figure}[H]
\centering
\includegraphics[width=0.23\textwidth]{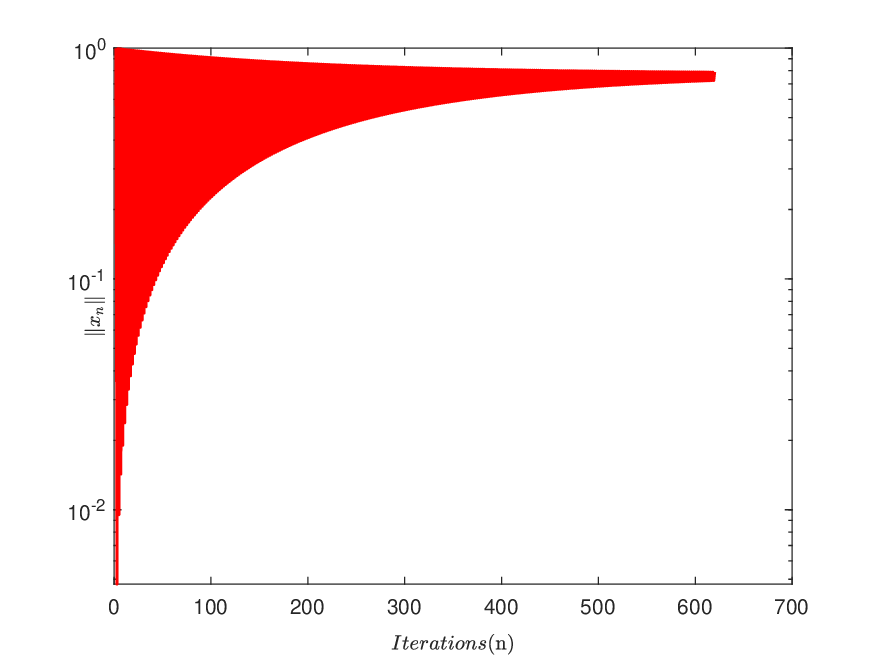}
\includegraphics[width=0.23\textwidth]{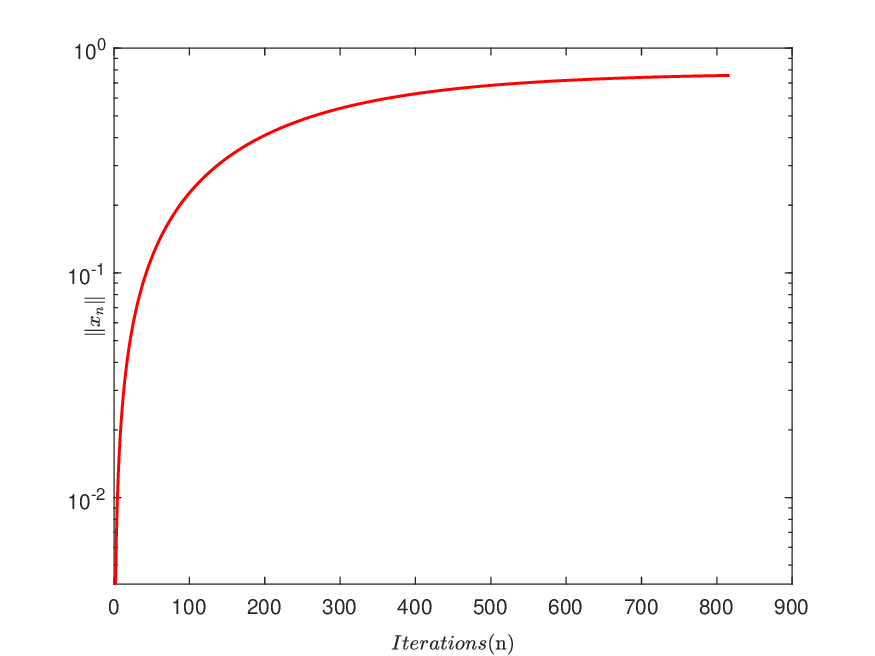}
\includegraphics[width=0.23\textwidth]{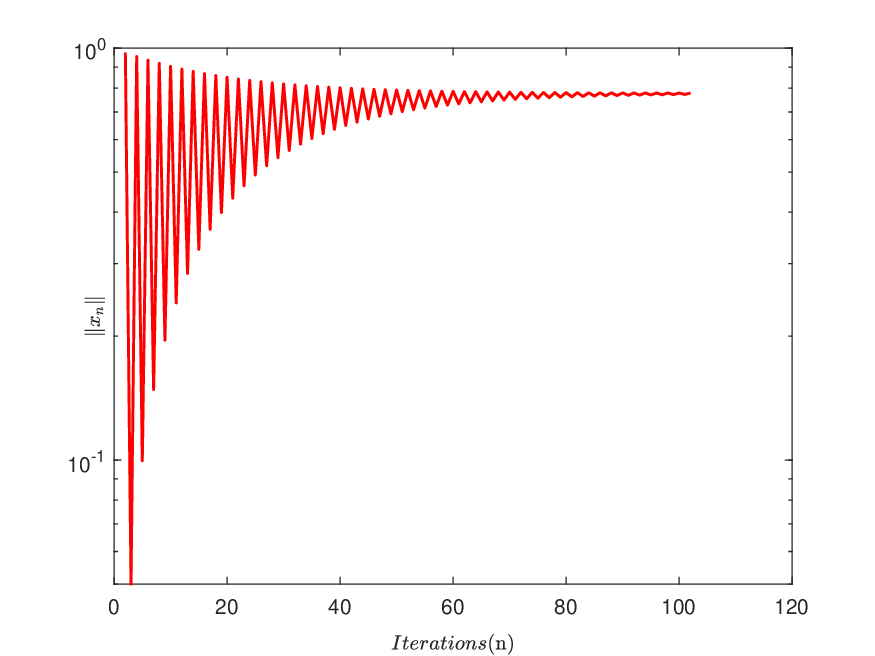}
\includegraphics[width=0.23\textwidth]{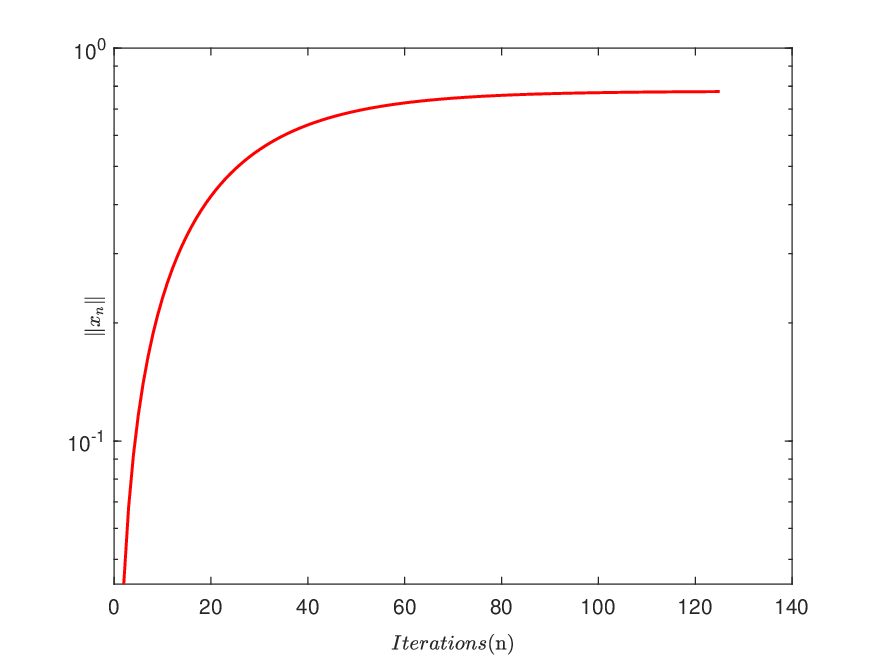}
  \caption{\small{Left two:$\lambda=0.2,\mu=0.01$, Right two:$\lambda=\mu=0.2$ for Algo.1 and Algo.2 }}
\label{Fig:4}
\end{figure}
\begin{figure}[H]
\centering
\includegraphics[width=0.23\textwidth]{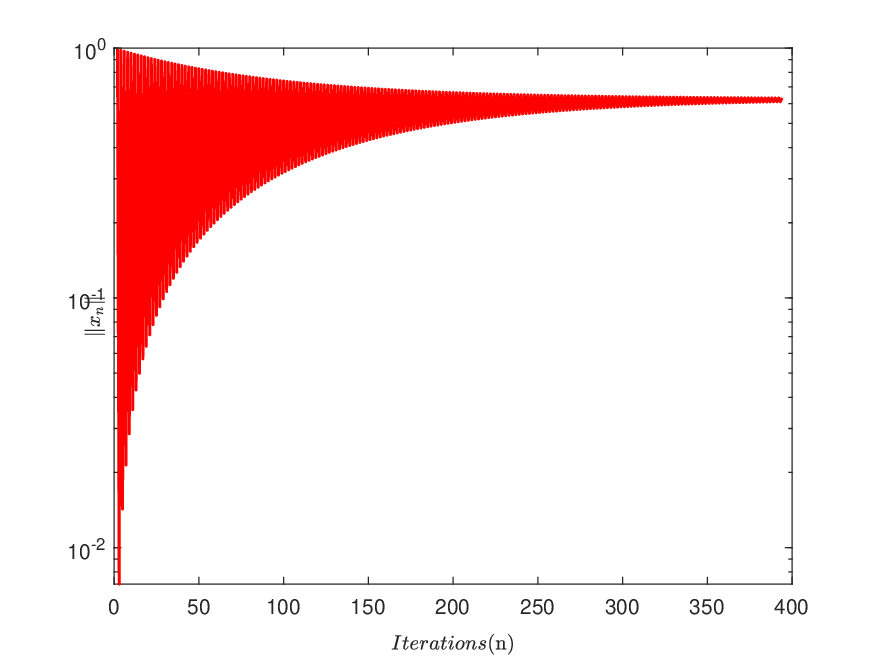}
\includegraphics[width=0.23\textwidth]{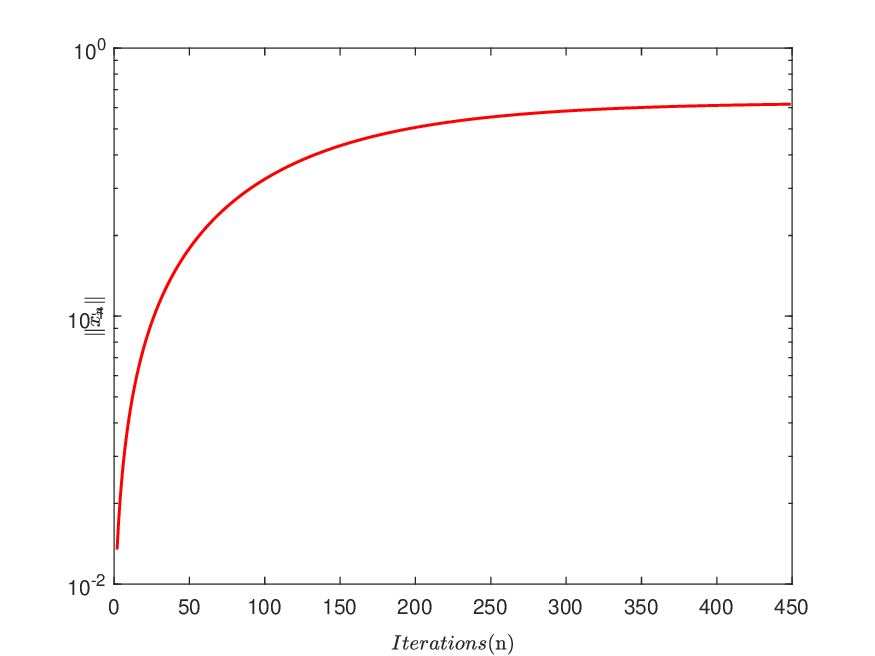}
\includegraphics[width=0.23\textwidth]{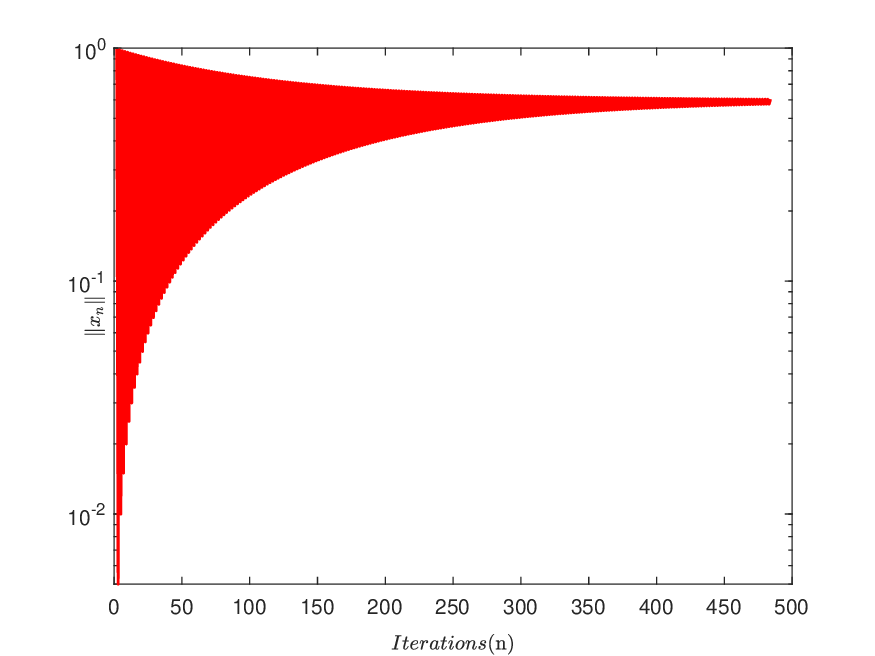}
\includegraphics[width=0.23\textwidth]{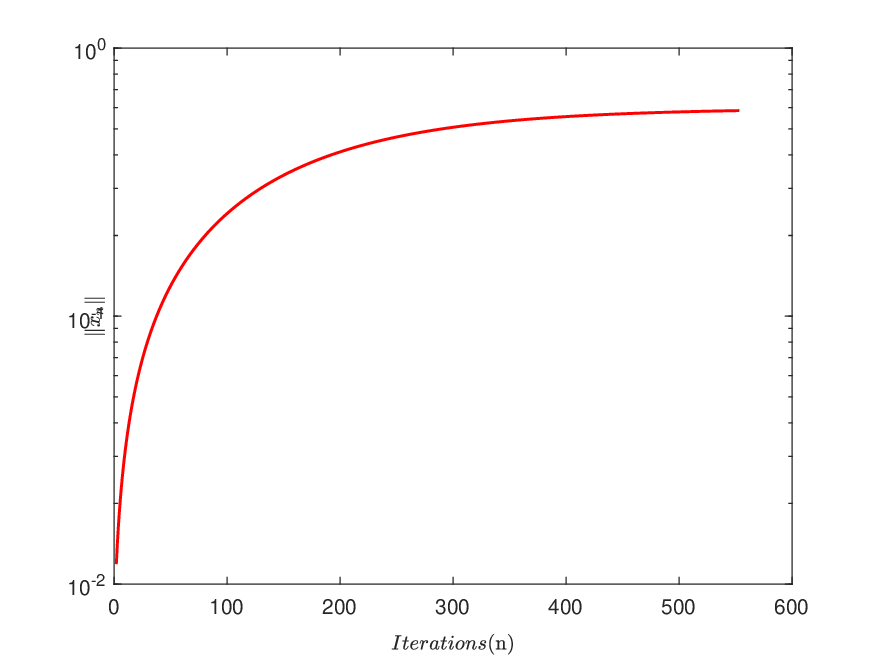}
  \caption{\small{Left two:$\mu=0.05,\lambda=0.02$,Right two$\lambda=\mu=0.02$ for Algo.1 and Algo.2 }}
\label{Fig:5}
\end{figure}

\begin{table}[H]
\small
  \centering
  \renewcommand{\arraystretch}{1.5}
   \caption{ Numerical results for Algo.1 and Algo.2}
\label{Tab:1}
\setlength{\tabcolsep}{0.7mm}
\begin{tabular}{| c| c c c | c c c | }
  \hline
&& Algo.2 ($\gamma=1.8,d_1=d_2=1$)& & & Algo.1($\gamma=0.9,d_1=d_2=1$)&\\
 \hline% after \\: \hline or \cline{col1-col2} \cline{col3-col4} ...
$\lambda,\mu$ &$\Phi(x^*)$&CPU time  & Iter.& $\Phi(x^*)$&CPU time  & Iter. \\
 \hline

$0.15,0.1$&-0.3422&0.0209&90&-0.3436&0.0429&106\\
$0.05,0.02$&-0.3800&0.1229&248&-0.3814&0.1837&282\\
$0.03,0.02$&-0.3830&0.1326&278&-0.3839&0.2015&310\\

$0.03,0.01$&-0.3828&0.2553&394&-0.3841&0.3951&450\\
$0.02,0.01$&-0.3838&0.2838&420&-0.3847&0.4585&483\\
$0.005$&-0.3843&1.2439&832&-0.3841&0.6419&579\\
$0.01$&-0.3844&0.4615&510&-0.3849&0.2286&339\\
$0.01,0.005$&-0.3842&0.8677&686 &-0.3847&1.225&801\\
 %     % after \\: \hline or \cline{col1-col2} \cline{col3-col4} ...
  \hline
\end{tabular}
\end{table}
\begin{table}[H]
\small
  \centering
  \renewcommand{\arraystretch}{1.5}
   \caption{ Numerical results for Algo.1 and Algo.2}
\label{Tab:1}
\setlength{\tabcolsep}{0.7mm}
\begin{tabular}{| c| cc c | c cc | }
  \hline
& & & Algo.1 ($\gamma=0.9$)& & & Algo.2($\gamma=1.8$)\\
 \hline% after \\: \hline or \cline{col1-col2} \cline{col3-col4} ...
$\lambda=4\mu,d_1=1.5,d_2=2$ &$\Phi(x^*)$&CPU time  & Iter.& $\Phi(x^*)$&CPU time  & Iter. \\
 \hline

$\mu=0.05$&-0.3574&0.6188&531&-0.3487&0.4939&444\\
$\mu=0.03$&-0.3778&1.229&708&-0.3707&0.7250&604\\
$\mu=0.01$&-0.3843&4.4622&1393&-0.3808&3.3629&1174\\

$\mu=0.012$&-0.3848&2.8127&1245&-0.3804&3.8384&1052\\
$\mu=0.0125$&-0.3849&2.9881&1241&-0.3803&2.3332&1026\\
 %     % after \\: \hline or \cline{col1-col2} \cline{col3-col4} ...
  \hline
\end{tabular}
\end{table}
From Table 1- 2, the operational results of the inertial gradient method (Algorithm 1) are closer to the exact solution of original DC problem than those of the gradient descent method (Algorithm 2), which is the reason that the idea of inertial has been increasingly applied to accelerate convergence in recent years.

\end{example}

\section{Conclusion}\label{Sec:Con}

In this paper, we  investigate the DC problem using Moreau regularization. Due to the convexity of both functions in the DC problem, global regularization cannot be easy to implement, so we regularizes each component separately with different parameters, smoothes the original problem, and defines a distance function to study and discuss the properties of approximate solutions of DC problem. At the same time, the approximation of the solution of the  original problem  by classical gradient algorithm and inertial gradient algorithm are discussed using the regularized function. Finally, numerical example demonstrate the approximation of the algorithms to the DC problem with different parameter selections.

\vskip 2mm

{\bf Competing Interests} The authors declare that they have no competing interests.
\vskip 2mm

\noindent

{\bf Authors' Contributions} All authors contributed equally to this work. All authors read and approved final manuscript.
\vskip 2mm

\noindent 

{\bf Acknowledgements} This article was funded by the National Natural Science Foundation of China (12071316) and Natural Science Foundation of Chongqing (cstc2021jcyj-msxmX0177).\\

\end{document}